\documentclass[11pt,reqno]{amsart}

\usepackage{a4wide}
\usepackage{hyperref,amsmath,amsfonts,mathscinet,amssymb,amsthm}
\usepackage{enumitem}
\usepackage{constants}
\usepackage{xcolor}
\usepackage{tikz-cd}
\usepackage{todonotes}

\theoremstyle{plain}
\newtheorem{theorem}{Theorem}[section]

\newtheorem{lemma}[theorem]{Lemma}

\theoremstyle{definition}

\theoremstyle{remark}

\numberwithin{equation}{section}

%\expandafter\let\expandafter\oldproof\csname\string\proof\endcsname
%\let\oldendproof\endproof
%\renewenvironment{proof}[1][\proofname]{%
%  \oldproof[\bf #1]%
%}{\oldendproof}

\headheight=12pt

\newcommand\ddfrac[2]{\frac{\displaystyle #1}{\displaystyle #2}}
\newcommand\esup{\operatornamewithlimits{ess\,sup}}
\newcommand\Id{\operatorname{I}}
\newcommand\Ces{\operatorname{Ces}}
\newcommand\Cop{\operatorname{Cop}}

\newcommand\supp{\operatorname{supp}}
\allowdisplaybreaks

\begin{document}

\title{Embeddings Between Weighted Ces\`{a}ro Function Spaces}

\author{Tu\u{g}\c{c}e \"{U}nver}

\email[T.~\"{U}nver]{tugceunver@kku.edu.tr}
\urladdr{0000-0003-0414-8400}

%\address{ }
%
\address{Department of Mathematics,
Faculty of Science and Arts,
 Kirikkale University,
  71450 Yahsihan,
  Kirikkale, Turkey}

\keywords{Ces\`{a}ro and Copson function spaces; embeddings; weighted inequalities; Hardy and 
	Copson operators; iterated operators}

\subjclass{46E30, 26D10, 47G10, 47B38}

\begin{abstract}
In this paper, we give the characterization of the embeddings between weighted Ces\`{a}ro function spaces. The proof is based on the duality technique, which reduces this problem to the characterizations of some direct and reverse Hardy-type inequalities and iterated Hardy-type inequalities.
\end{abstract}

\date{\today}

\maketitle

\section{Introduction}

Our principle goal in this paper is to obtain two-sided estimates of the best constant $c$ in the inequality 
\begin{align}\label{emb. inequality}
&\bigg(\int_0^{\infty} \bigg(\int_0^t f(s)^{p_2} v_2(s)^{p_2} 
ds\bigg)^{\frac{q_2}{p_2}} u_2(t)^{q_2} dt\bigg)^{\frac{1}{q_2}} \nonumber 
\\
&\hspace{3cm}\leq c \bigg(\int_0^{\infty} \bigg(\int_0^t f(s)^{p_1} 
v_1(s)^{p_1} ds\bigg)^{\frac{q_1}{p_1}} u_1(t)^{q_1} 
dt\bigg)^{\frac{1}{q_1}},
\end{align}
where $0 < p_1, p_2, q_1, q_2 < \infty$ and $u_1, u_2, v_1, v_2$ are non-negative measurable functions. 

Let $X$ and $Y$ be quasi normed vector spaces. If $X \subset Y$ and the identity operator is continuous from $X$ to $Y$, that is, there exists a positive constant $c$ such that $\|\Id(z)\|_Y \leq c\|z\|_X$ for all $z \in X$, we say that $X$ is embedded into $Y$ and write $X \hookrightarrow Y$. We denote by $\mathcal M^+$, the set of all non-negative measurable functions on $(0, \infty)$. A weight is a function such that measurable, positive and finite a.e on $(0,\infty)$ and we will denote the set of weights by $\mathcal W$. 

We denote by $\Ces_{p,q}(u,v)$, the weighted Ces\`{a}ro function spaces and $\Cop_{p,q}(u,v)$, the weighted Copson function spaces, the collection of all functions on $\mathcal{M}^+$ such that
\begin{equation*}
\|f\|_{\Ces_{p,q}(u,v)} = \bigg(\int_0^{\infty} \bigg(\int_0^t f(s)^{p} v(s)^{p} 
ds\bigg)^{\frac{q}{p}} u(t)^{q} dt\bigg)^{\frac{1}{q}} < \infty,
\end{equation*}
and
\begin{equation*}
\|f\|_{\Cop_{p,q}(u,v)} = \bigg(\int_0^{\infty} \bigg(\int_t^{\infty} f(s)^{p} v(s)^{p} 
ds\bigg)^{\frac{q}{p}} u(t)^{q} dt\bigg)^{\frac{1}{q}} < \infty,
\end{equation*}
respectively, where $p,q \in (0, \infty)$, $u \in \mathcal{M}^+$ and $v \in \mathcal W$. Then with 
this denotation, we can formulate the main aim of this paper as the characterization of the 
embeddings between weighted Ces\`{a}ro function spaces, that is, 
\begin{equation*}
\Ces_{p_1, q_1}(u_1, v_1) \hookrightarrow \Ces_{p_2, q_2}(u_2, v_2).
\end{equation*} 

The classical Ces\`{a}ro function spaces $\Ces_{1,p}(x^{-1}, 1)$ have been 
defined by Shiue in \cite{shiue} and it was shown in \cite{hashus} that these 
spaces are Banach spaces when $p > 1$.

In \cite{bennett1996}, it was shown that $\Ces_{1, p}(x^{-1}, 1)$ and 
$\Cop_{1,p}(1, x^{-1})$ coincide when $1 < p < \infty$ and the dual of the 
$\Ces_{1, p}(x^{-1}, 1)$ function spaces is given with a simpler description 
than in \cite{syzhanglee} as a remark. 

During the past decade, these spaces have not been studied to a high degree but 
recently Astashkin and Maligranda began to examine the properties of classical 
Ces\`{a}ro and Copson spaces in various aspects (\cite{asmal12, asmal13, 
	astas5, astashkinmaligran11, astasmal2008, astasmal2009, astasmal2010, 
	astasmalig10}), for the detailed information see the survey paper 
\cite{asmalsurvey}. In \cite{astasmal2009}, they gave the proof of the 
characterization of dual spaces of classical Ces\`{a}ro function spaces. Later, 
in  \cite{kamkub} authors computed the dual norm of the spaces $Ces_{1, p}(w, 
1)$ generated by an arbitrary positive weight $w$, where $1 < p < \infty$. In \cite{bmm2018}, 
factorizations of spaces $\Ces_{1, p}(1, x^{-1},v)$ and $\Cop_{1,p}(x^{-1}, v)$ are presented. 

In their newly papers, Le\'{s}nik and Maligranda (\cite{LesMal1, LesMal2, 
	LesMal3, LesMal4}) started the study of these spaces in an abstract setting and 
they replaced the role of $L_p$ spaces with a more general function space $X$.

A Banach ideal space $X$ on $(0, \infty)$ is a Banach space contained in $\mathcal M^+$ which 
satisfies the monotonicity property, that is, $f, g \in \mathcal M^+$, $f \leq g$ a.e  on $(0, 
\infty)$ and $g \in X$ then $f \in X$ and $\|f\| \leq \|g\|$.

For a Banach ideal space $X$ on $(0, \infty)$, Le\'{s}nik and Maligranda 
defined an abstact Ces\`{a}ro space $CX$ as
\begin{equation*}
CX = \{f \in \mathcal M^+, Cf \in X\}
\end{equation*}  
with the norm $\|f\|_{CX} = \|Cf\|_X$ and an abstract Copson space $C^*X$ as 
\begin{equation*}
C^*X = \{f \in \mathcal M^+,  C^*f \in X\}
\end{equation*}  
with the norm $\|f\|_{C^*X} = \|C^*f\|_X$, where
\begin{equation*}
Cf(x) = \frac{1}{x} \int_0^x f(t) dt \quad \text{and} \quad C^*f(x) = 
\int_x^{\infty} \frac{f(t)}{t} dt, \quad x\in (0, \infty).
\end{equation*} 
Moreover, in \cite{curbera16} abstract Ces\`{a}ro spaces were considered for rearrangement 
invariant spaces. 

Note that taking $X= L_p$, the definition of abstract spaces is related to our definition in the 
following way: $CL_p = \Ces_{1, p}(x^{-1}, 1)$.

Let $X$ and $Y$ be (quasi-) Banach spaces of measurable functions on $(0,\infty)$. Denote by 
$\textsf{M}(X, Y)$, the space of all multipliers, that is,
\begin{equation*}
\textsf{M}(X,Y):= \{f: \, f\cdot g \in Y\quad  \text{for all} \quad g\in X\}.
\end{equation*}
The K\"{o}the dual $X'$ of $X$ is defined as the space $\textsf{M}(X, L_1)$ of multipliers into 
$L_1$. 

The space of all multipliers from $X$ into $Y$ is a quasi normed space with the quantity
\begin{equation*}
\|f\|_{\textsf{M}(X, Y)} := \sup_{g \neq 0} \frac{\|fg\|_Y}{\|g\|_X}.
\end{equation*}
Now, define a weighted space $Y_f = \{g: \quad f\cdot g \in Y, \quad f \in \mathcal{W}\}$. Then 
\begin{equation*}
\|f\|_{\textsf{M}(X,Y)} = \sup_{g \neq 0} \frac{\|g\|_{Y_f}}{\|g\|_X} = \|\Id\|_{X\rightarrow 
	Y_f}.
\end{equation*}

We should mention that, in \cite{grosse}, it is stated that the characterizing the 
multipliers between Ces\`{a}ro and Copson spaces are difficult and note that 
the weighted Ces\`{a}ro and Copson spaces are related to the spaces $C$ and $D$ 
defined in \cite{grosse} as follows:
\begin{equation*}
\Ces_{p, q}(u, v)= C(p, q, u)_v \quad \text{and} \quad \Cop_{p, q}(u, v) = 
D(p, q, u)_v.
\end{equation*}
Among all, recently in \cite{LesMal4}, multipliers between  $\Ces_{1, p}(x^{-1}, 1)$ and 
$\Cop_{1,q}(1, x^{-1})$ is given when $1 < q \leq p \leq \infty$. 

With this motivation in \cite{GogMusUn}, the embeddings between weighted Copson and 
Ces\`{a}ro function spaces, that is, 
\begin{equation*}
\Cop_{p_1, q_1}(u_1, v_1) \hookrightarrow \Ces_{p_2, q_2}(u_2, v_2)
\end{equation*}
have been characterized under the restriction $p_2 \leq q_2$ arises from duality approach. Also, 
using these results pointwise multipliers of weighted Ces\`{a}ro and Copson spaces is given in 
\cite{GogMusUn2}. We want to extend these results. In \cite{GogMusUn} Sawyer duality principle reduced the problem of embeddings to solutions of 
iterated Hardy-type inequalities of the forms
\begin{equation*}
\bigg( \int_0^{\infty} \bigg(\sup_{s \in (0,t)} u(s) \int_s^{\infty} f \bigg)^q w(t) 
dt\bigg)^{\frac{1}{q}} \leq C
\bigg(\int_0^{\infty} f(t)^p v(t) dt\bigg)^{\frac{1}{p}},
\end{equation*} 
and 
\begin{equation*}
\bigg( \int_0^{\infty} \bigg(\int_0^t \bigg( \int_s^{\infty} f\bigg)^m u(s) ds 
\bigg)^{\frac{q}{m}} w(t) dt\bigg)^{\frac{1}{q}} \leq C \bigg(\int_0^{\infty} f(t)^p v(t) 
dt\bigg)^{\frac{1}{p}}
\end{equation*}
where $0 < m, q \leq \infty$ and $1 < p < \infty$ (see, for instance \cite{gmp, 
	GogMusIHI, GogMusISI, gop}).  

Using the same approach as in \cite{GogMusUn}, inequality \eqref{emb. inequality} reduces 
to the characterization of iterated inequalities (containing iterated Copson-type operators) of the 
following type, 
\begin{equation*}
\bigg( \int_0^{\infty} \bigg(\sup_{s \in (t, \infty)} u(s) \int_s^{\infty} f \bigg)^q w(t) 
dt\bigg)^{\frac{1}{q}} \leq C
\bigg(\int_0^{\infty} f(t)^p v(t) dt\bigg)^{\frac{1}{p}},
\end{equation*} 
and 
\begin{equation*}
\bigg( \int_0^{\infty} \bigg(\int_t^{\infty} \bigg( \int_s^{\infty} f\bigg)^m u(s) ds 
\bigg)^{\frac{q}{m}} w(t) dt\bigg)^{\frac{1}{q}} \leq C \bigg(\int_0^{\infty} f(t)^p v(t) 
dt\bigg)^{\frac{1}{p}},
\end{equation*}
where $1 < p < \infty$ and $0 < q, m < \infty $. Until recently the solutions of these 
problems were not known but not long ago different characterizations have been given for 
these inequalities, see \cite{Krepela, GogMusIHI,GogMusISI,Mus,KrePick}, therefore now we 
are able to continue this study. We will use characterizations from \cite{Krepela} and 
\cite{KrePick}.  

In order to shorten the formulas and simplify the notation, we will characterize the following 
inequality:
\begin{equation}\label{main inequality}
\bigg(\int_0^{\infty} \bigg(\int_0^t f(s)^p v(s) ds\bigg)^{\frac{q}{p}} u(t) dt 
\bigg)^{\frac{1}{q}}  \leq C \bigg( \int_0^{\infty} \bigg(\int_0^t f(s)  ds \bigg)^{\theta} 
w(t) dt \bigg)^{\frac{1}{\theta}}.
\end{equation}
It is easy to see that taking parameters $p = \frac{p_2}{p_1}$, \, $q = \frac{q_2}{p_1}$, 
\, $\theta = \frac{q_1}{p_1}$ and weights $v = v_1^{-p_2} v_2^{p_2}$, \, $u = 
u_2^{q_2}$,\,  $w = u_1^{q_1}$, we can obtain the characterization of inequality 
\eqref{emb. inequality}.

When $p= q$ or $\theta = 1$, \eqref{main inequality} has been characterized in 
\cite{GogMusUn} by using direct and reverse Hardy-type inequalities. Unfortunately in this paper we 
will solve this problem under the restriction $p < q$ arising from the techniques we 
used, we will deal with the case when $q < p$ in the future paper with a different approach. On the 
other hand we always assume that $p<1$, since otherwise inequality \eqref{main inequality} 
holds only for trivial functions (see Lemma~\ref{triviality}).  

We adopt the following usual conventions. Throughout the paper we put $0/0 = 0$, $0 \cdot (\pm 
\infty) = 0$ and $1 / (\pm\infty) =0$. For $p\in (1,\infty)$, we define $p' = \frac{p}{p-1}$. We 
always denote by $c$ and $C$ a positive constant, which is independent of main parameters but it 
may vary from line to line. However a constant with subscript or superscript
such as $c_1$ does not change in different occurrences. By $a \lesssim b$, ($b\gtrsim a$) we mean 
that $a\leq \lambda b$, where $\lambda > 0$ depends on inessential parameters. If $a \lesssim b$ 
and
$b\lesssim a$, we write $a\approx b$ and say that $a$ and $b$ are
equivalent.  We will denote by $\bf 1$ the function ${\bf 1}(x) =
1$, $x \in \mathbb R$. Since the expressions on our main results are too
long, to make the formulas plain we sometimes omit the differential
element $dx$. 

Now, we will present the main results of the paper. 

\begin{theorem}\label{maintheorem1}
	Let $0 < \theta \leq p < \min\{1, q\}$. Assume that $u, v \in \mathcal{M}^+$ and $w \in 
	\mathcal{W}$ such that $\int_t^{\infty} w < \infty$ for all $t \in (0, \infty)$.
	
	{\rm (i)} If $1 \leq q < \infty$, then inequality \eqref{main inequality} holds for all $f \in 
	\mathcal M^+$ if and only if 
	\begin{align*}%\label{A_1}
	A_1 :=  \sup_{x \in (0,\infty)} \bigg(\int_x^{\infty} w \bigg)^{-\frac{1}{\theta}} \sup_{t 
		\in (x,\infty)} \bigg(\int_x^t v^{\frac{1}{1-p}} \bigg)^{\frac{1-p}{p}} \bigg( 
	\int_t^{\infty} u\bigg)^{\frac{1}{q}} < \infty.
	\end{align*}
	Moreover, the best constant in \eqref{main inequality} satisfies $C \approx A_1$.
	
	{\rm (ii)} If $q < 1$, then inequality \eqref{main inequality} holds for all $f \in 
	\mathcal M^+$ if and only if 
	\begin{align*}%\label{A_2}
	A_2 :=  \sup_{x \in (0,\infty)} \bigg(\int_x^{\infty} w \bigg)^{-\frac{1}{\theta}} 
	\bigg(\int_x^{\infty} \bigg(\int_x^t v^{\frac{1}{1-p}} \bigg)^{\frac{q(1-p)}{p(1-q)}} 
	\bigg( \int_t^{\infty} u \bigg)^{\frac{q}{1-q}} u(t) dt \bigg)^{\frac{1-q}{q}} < \infty.
	\end{align*}
	Moreover, the best constant in \eqref{main inequality} satisfies $C \approx A_2$.
\end{theorem}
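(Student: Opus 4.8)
Here is the strategy I would follow.

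\smallskip

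\emph{Preliminary reductions.} Under the standing hypotheses $0<\theta\le p<\min\{1,q\}$ — the restriction $p<1$ being the natural one in view of Lemma~\ref{triviality} — and $\int_t^\infty w<\infty$, one first checks that $A_1$ and $A_2$ are well defined, and that it suffices to prove \eqref{main inequality} for $f\in\mathcal M^+$ bounded with compact support in $(0,\infty)$; the general case then follows by monotone convergence. This makes all integrals below finite and legitimizes the substitution $h=\int_0^\cdot f$ (so that $h$ is nondecreasing, $h(0)=0$, $h'=f$) together with every use of Fubini's theorem. One may also assume the right-hand side of \eqref{main inequality} is finite.

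\smallskip

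\emph{Necessity.} For $0<x<\tau<\infty$ I would test \eqref{main inequality} with $f=v^{\frac1{1-p}}\chi_{(x,\tau)}$. Since $v^{\frac{p}{1-p}}v=v^{\frac1{1-p}}$, this $f$ has the crucial property $\int_0^tf^pv=\int_0^tf$ for all $t$, so both sides of \eqref{main inequality} become powers of the nondecreasing function $t\mapsto\int_0^tf$, which equals $\int_x^tv^{\frac1{1-p}}$ on $(x,\tau)$ and is constant afterwards. Bounding the left-hand side below by its tail contribution $\bigl(\int_x^\tau v^{\frac1{1-p}}\bigr)^{1/p}\bigl(\int_\tau^\infty u\bigr)^{1/q}$ and the right-hand side above by $\bigl(\int_x^\tau v^{\frac1{1-p}}\bigr)\bigl(\int_x^\infty w\bigr)^{1/\theta}$, and then taking suprema over $\tau$ and $x$, yields $A_1\le C$, proving necessity in case {\rm (i)}. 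In case {\rm (ii)}, $q<1$, this single-block test function is too crude; I would superpose such functions — equivalently, test with $f=v^{\frac1{1-p}}\psi_x\chi_{(x,\infty)}$ for a profile $\psi_x$ dictated by the inner integrand of $A_2$ — and exploit the reverse triangle inequality $\bigl(\sum_k a_k\bigr)^q\ge\sum_k a_k^{\,q}$, valid for $q<1$, to recover $A_2\le C$.

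\smallskip

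\emph{Sufficiency.} The engine is the elementary observation that, for $0<p<1$, Hölder's inequality with conjugate exponents $\frac1p$ and $\frac1{1-p}$ gives, on any interval $(a,b)\subset(0,\infty)$,
\begin{equation*}
\int_a^b f^pv\le\Bigl(\int_a^bf\Bigr)^p\Bigl(\int_a^bv^{\frac1{1-p}}\Bigr)^{1-p},
\end{equation*}
with near-equality precisely when $f$ is proportional to $v^{\frac1{1-p}}$ on $(a,b)$ — the very extremals used in the necessity part. Applying $L^{q/p}$-duality to the left-hand side of \eqref{main inequality} (legitimate since $q/p>1$) converts the problem into a Copson-type embedding, which by the displayed estimate and Fubini's theorem reduces to an iterated Hardy-type inequality for $h=\int_0^\cdot f$ — of supremum type when $q\ge1$ and of double-integral type when $q<1$. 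I would then invoke the recent characterizations of such inequalities from \cite{Krepela} and \cite{KrePick}, and carry out the (lengthy but routine) simplification identifying the resulting conditions with $A_1$ in case {\rm (i)} and $A_2$ in case {\rm (ii)}, tracking constants to get $C\approx A_1$, resp. $C\approx A_2$. As an independent check one can argue directly by the discretization and anti-discretization method: pick breakpoints $\{x_k\}$ with $\int_{x_k}^\infty w\approx2^k$, apply the displayed Hölder estimate \emph{blockwise} on the corresponding intervals — so that the left endpoints $x_k$ take the role of ``$x$'' and the local quantities $\int_{x_k}^tv^{\frac1{1-p}}$ emerge — and reassemble the blocks using $\ell^1\hookrightarrow\ell^q$ when $q\ge1$ and $\ell^q\hookrightarrow\ell^1$ when $q<1$.

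\smallskip

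\emph{Main obstacle.} The heart of the difficulty — and the reason $A_1$ and $A_2$ feature $\int_x^tv^{\frac1{1-p}}$ rather than $\int_0^tv^{\frac1{1-p}}$ — is that a single global use of the Hölder estimate is lossy and yields only a sufficient condition strictly larger than $A_1$ (resp. $A_2$). Restoring sharpness forces one either to estimate blockwise along a $w$-adapted partition and control all blocks simultaneously, or to route everything through the (only recently available) characterizations of iterated Hardy-type inequalities; in both routes the delicate part is the bookkeeping needed to match the outcome with the closed-form quantities $A_1$ and $A_2$, which in the non-normable regime $q<1$ brings in the familiar pathologies of $L^q$ for $q<1$.
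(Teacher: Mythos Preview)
Your proposal diverges from the paper's proof in a significant way, and the sufficiency part has a gap in its reduction step. The paper does start with $L^{q/p}$-duality as you do, obtaining
\[
C=\sup_{h\in\mathcal M^+}\frac{1}{\|h\|}\,\sup_{f\in\mathcal M^+}\frac{\big(\int_0^\infty f^p\,v\,\int_\cdot^\infty h\big)^{1/p}}{\big(\int_0^\infty(\int_0^t f)^\theta w\big)^{1/\theta}},
\]
but then recognizes the inner supremum over $f$ as a \emph{reverse} Hardy inequality (Theorem~\ref{Reverse Hardy Inequality q < p}). The crucial point you miss is that, precisely because $\theta\le p$ here, the reverse Hardy characterization from \cite{ego2008} is a single \emph{supremum} over a point $x$ (no integral term). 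One can therefore interchange this $\sup_x$ with the outer $\sup_h$, and what remains for each fixed $x$ is a \emph{standard} Copson inequality in $h$, handled by Theorem~\ref{Hardy Inequality q<i}. No iterated Hardy machinery is needed; the references \cite{Krepela,KrePick} enter only in Theorems~\ref{maintheorem3} and~\ref{maintheorem4}, where $p<\theta$ forces an integral term in the reverse Hardy step and genuinely produces an iterated operator.

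Your sufficiency sketch, by contrast, tries to reach an iterated Hardy inequality via the pointwise H\"older bound and the substitution $h=\int_0^\cdot f$. But that substitution does not convert $\int_0^t f^p v$ into any Hardy-type expression in $h$; a global H\"older only gives $\int_0^t f^pv\le h(t)^p(\int_0^t v^{1/(1-p)})^{1-p}$, which --- as you yourself note --- yields a condition with $\int_0^t v^{1/(1-p)}$ rather than $\int_x^t v^{1/(1-p)}$ and hence is strictly stronger than $A_1$. You then say this is repaired either by discretization or by ``routing through iterated Hardy'', but the latter route is never made precise, and in this regime there is no natural iterated operator to route through. The discretization you outline could in principle be made to work, but that is an independent (and longer) argument, not the one the paper gives. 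Your necessity argument for (i) is fine and matches the extremals implicit in the reverse Hardy step; for (ii) the superposition idea is plausible but would need to be written out.
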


\begin{theorem}\label{maintheorem2}
	Let  $p=1$ and $0 < \theta \leq 1 < q < \infty$ . Assume that $u, v \in 
	\mathcal{M}^+$ and $w \in \mathcal{W}$  such that $\int_t^{\infty} w < \infty$ for 
	all $t \in (0, \infty)$. Then inequality \eqref{main inequality} holds for all $f \in 
	\mathcal M^+$ if and only if 
	\begin{align*}%\label{A_3}
	A_3 :=  \sup_{t \in (0,\infty)} \bigg( \int_t^{\infty} u\bigg)^{\frac{1}{q}} \esup_{s \in 
		(0, t)} v(s) \bigg(\int_s^{\infty} w \bigg)^{-\frac{1}{\theta}}   < \infty.
	\end{align*}
	Moreover, the best constant in \eqref{main inequality} satisfies $C \approx A_3$.
\end{theorem}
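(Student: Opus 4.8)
The plan is to exploit that $p=1$, which makes the inner integral $\int_0^t f(s)v(s)\,ds$ on the left-hand side of \eqref{main inequality} \emph{linear} in $f$ — precisely the feature that renders the duality technique applicable. Write $Hf(t)=\int_0^t fv$, $Gf(t)=\int_0^t f$, and $W(t)=\int_t^\infty w$ (finite, continuous, strictly decreasing to $0$ under the hypotheses). The usual triviality reductions let me assume $Gf(t)<\infty$ for all $t$, since otherwise the right-hand side is infinite; then $g:=Gf$ is finite, non-decreasing, locally absolutely continuous, with $g(0^+)=0$.

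For the upper bound, since $q>1$ I would dualize the outer $L^q(u\,dt)$-norm, $\|Hf\|_{L^q(u)}=\sup\{\int_0^\infty Hf\cdot h\,u:\ h\ge0,\ \|h\|_{L^{q'}(u)}\le1\}$, and then by Tonelli's theorem $\int_0^\infty Hf(t)h(t)u(t)\,dt=\int_0^\infty f(s)v(s)\phi_h(s)\,ds$ with $\phi_h(s):=\int_s^\infty hu$. Thus the best constant in \eqref{main inequality} equals $\sup_h C_h$, where $C_h$ is the best constant in the reverse Hardy-type inequality $\int_0^\infty f\,(v\phi_h)\le C_h\big(\int_0^\infty (Gf)^\theta w\big)^{1/\theta}$, $0<\theta\le1$. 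I will use the standard characterization (see e.g.\ \cite{GogMusUn}): for non-negative measurable $a$, the best constant in $\int_0^\infty f a\le C\big(\int_0^\infty(Gf)^\theta w\big)^{1/\theta}$ is comparable to $\esup_{s>0}a(s)W(s)^{-1/\theta}$. (Necessity is seen by testing with $f=\varepsilon^{-1}\chi_{(x-\varepsilon,x)}$; for sufficiency, from $a\lesssim W^{1/\theta}$, the identity $W^{1/\theta}(s)=\tfrac1\theta\int_s^\infty W^{1/\theta-1}w$ and Tonelli give $\int fa\lesssim\int_0^\infty g(r)W(r)^{1/\theta-1}w(r)\,dr$, which is $\le\theta^{-1}\big(\int g^\theta w\big)^{1/\theta}$ by the pointwise bound $g(r)\le\big(\int g^\theta w\big)^{1/\theta}W(r)^{-1/\theta}$ valid for non-decreasing $g$ with $g(0^+)=0$.) Applying this with $a=v\phi_h$ and using the Hölder estimate $\phi_h(s)=\int_s^\infty hu\le\|h\|_{L^{q'}(u)}\big(\int_s^\infty u\big)^{1/q}\le\big(\int_s^\infty u\big)^{1/q}$, I obtain $C_h\lesssim\esup_s v(s)\big(\int_s^\infty u\big)^{1/q}W(s)^{-1/\theta}$ uniformly in $h$, and this quantity equals $A_3$.

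For the lower bound I would test \eqref{main inequality} directly with $f=\varepsilon^{-1}\chi_{(x-\varepsilon,x)}$: restricting the outer integral to $(x,\infty)$, where $Hf$ is constant, bounds the left-hand side below by $\big(\varepsilon^{-1}\int_{x-\varepsilon}^x v\big)\big(\int_x^\infty u\big)^{1/q}$, while $Gf\le\chi_{(x-\varepsilon,\infty)}$ bounds the right-hand side above by $W(x-\varepsilon)^{1/\theta}$; letting $\varepsilon\to0$ and invoking the continuity of $W$ together with the Lebesgue differentiation theorem yields $C\ge v(x)\big(\int_x^\infty u\big)^{1/q}W(x)^{-1/\theta}$ for a.e.\ $x$, hence $C\ge A_3$. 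It then remains to record the elementary identity $\esup_{x>0}v(x)\big(\int_x^\infty u\big)^{1/q}W(x)^{-1/\theta}=\sup_{t>0}\big(\int_t^\infty u\big)^{1/q}\esup_{s\in(0,t)}v(s)W(s)^{-1/\theta}$, which follows from the monotonicity of $t\mapsto\int_t^\infty u$ (for "$\ge$" let $t\downarrow x$; for "$\le$" use $\int_t^\infty u\le\int_s^\infty u$ when $s<t$).

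I expect the main obstacle to be the duality step: one must check that $q>1$ genuinely permits the Köthe-duality representation of the $L^q(u)$-norm, that the supremum over the dual functions $h$ really collapses to the single quantity $A_3$ — for which the Hölder estimate on $\phi_h$ is the crucial ingredient — and that the reverse Hardy characterization is invoked in exactly the right weighted, $\theta\le1$ form. Everything else is bookkeeping with monotone functions and Tonelli's theorem.
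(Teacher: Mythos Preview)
Your overall strategy matches the paper's: dualize the outer $L^q$-norm (using $q>1$), reduce to a reverse Hardy inequality with exponent $p=1$ and $\theta\le 1$, and then compute the supremum over the dual functions $h$. The paper cites Theorem~\ref{Reverse Hardy Inequality q < p}(ii) for the reverse Hardy step, then applies the identity \eqref{Fubini sup inc.} and finally Theorem~\ref{Hardy Inequality q=i} to evaluate the supremum over $h$. You instead sketch the reverse Hardy characterization by hand and replace the final appeal to Theorem~\ref{Hardy Inequality q=i} by the H\"older estimate $\phi_h(s)\le\big(\int_s^\infty u\big)^{1/q}$ (upper bound) together with testing by approximate Dirac masses (lower bound); this is a legitimate and slightly more elementary variant that reproves exactly the piece of Theorem~\ref{Hardy Inequality q=i} you need. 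The closing identity you record is precisely the non-increasing version of \eqref{Fubini sup inc.}.

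There is, however, a genuine gap in your sufficiency sketch for the reverse Hardy step. You assert that
\[
\int_0^\infty g(r)\,W(r)^{1/\theta-1}w(r)\,dr\le\theta^{-1}\Big(\int g^\theta w\Big)^{1/\theta}
\]
follows ``by the pointwise bound $g(r)\le\big(\int g^\theta w\big)^{1/\theta}W(r)^{-1/\theta}$''. But substituting that bound directly gives $\big(\int g^\theta w\big)^{1/\theta}\int_0^\infty W(r)^{-1}w(r)\,dr$, and since $W(\infty)=0$ one has $\int_0^\infty W^{-1}w=-[\log W]_0^\infty=+\infty$; the step fails as written. The remedy is to apply the pointwise bound only to the factor $g^{1-\theta}$: writing $g=g^\theta\cdot g^{1-\theta}$ and using $g(r)^{1-\theta}\le\big(\int g^\theta w\big)^{(1-\theta)/\theta}W(r)^{-(1-\theta)/\theta}$, one obtains
\[
\int_0^\infty g\,W^{1/\theta-1}w\le\Big(\int g^\theta w\Big)^{(1-\theta)/\theta}\int_0^\infty g^\theta w=\Big(\int g^\theta w\Big)^{1/\theta},
\]
which is exactly what you need. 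Alternatively, simply invoke Theorem~\ref{Reverse Hardy Inequality q < p}(ii), as the paper does, and avoid the issue altogether.
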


\begin{theorem}\label{maintheorem3}
	Let $0 < p < \min\{1,q, \theta\}$. Assume that $u, v \in \mathcal{M}^+$ and $w \in \mathcal{W}$ 
	such that $\int_t^{\infty} w < \infty$ for all $t \in (0, \infty)$. Suppose that
	\begin{equation*}
	0 < \bigg(\int_0^t \bigg(\int_s^t v^{\frac{1}{1-p}} \bigg)^{\frac{\theta(1-p)}{\theta - p}} 
	\bigg(\int_s^{\infty} w\bigg)^{-\frac{\theta}{\theta-p}} w(s) 
	ds\bigg)^{\frac{\theta-p}{\theta}} < \infty
	\end{equation*}
	holds for all $t \in (0,\infty)$.
	
	{\rm (i)} If $\max\{1, \theta\} \leq q < \infty$, then inequality \eqref{main inequality} 
	holds for all $f \in \mathcal M^+$ if and only if 
	\begin{align}\label{A_4}
	A_4 :=  \bigg(\int_0^{\infty} w \bigg)^{-\frac{1}{\theta}} \sup_{t \in (0,\infty)} 
	\bigg(\int_0^t v^{\frac{1}{1-p}} \bigg)^{\frac{1-p}{p}} \bigg( \int_t^{\infty} u 
	\bigg)^{\frac{1}{q}} < \infty,
	\end{align}
	and
	\begin{align}\label{A_{5}}
	A_{5} := \sup_{t \in (0,\infty)} \bigg( \int_0^t  \bigg(\int_s^{\infty} w 
	\bigg)^{-\frac{\theta}{\theta-p}} w(s) \bigg(\int_s^t v^{\frac{1}{1-p}} 
	\bigg)^{\frac{\theta(1-p)}{\theta-p}} ds \bigg)^{\frac{\theta-p}{\theta p}} 
	\bigg(\int_t^{\infty} u 
	\bigg)^{\frac{1}{q}}  < \infty.
	\end{align}
	Moreover, the best constant in \eqref{main inequality} satisfies $C \approx A_4 + A_{5}$.
	
	{\rm (ii)} If $1 \leq q < \theta < \infty$, then inequality \eqref{main inequality} holds for 
	all $f \in \mathcal M^+$ if and only if $A_4 < \infty$, 
	\begin{align*}%\label{A_{6}}
	A_{6} &:=  \left( \int_0^{\infty} \bigg( \int_0^t \bigg( \int_s^{\infty} w 
	\bigg)^{-\frac{\theta}{\theta - p}} w(s) ds \bigg)^{\frac{\theta(q-p)}{p(\theta - q)}}  
	\bigg( \int_t^{\infty} w \bigg)^{-\frac{\theta}{\theta - p}} w(t) \right. \notag\\
	& \hspace{2cm} \times \left. \sup_{z \in (t,\infty)} \bigg(\int_t^z v^{\frac{1}{1-p}} 
	\bigg)^{\frac{\theta q(1-p)}{p(\theta - q)}} 
	\bigg( \int_z^{\infty} u \bigg)^{\frac{\theta}{\theta - q}} dt 
	\right)^{\frac{\theta-q}{\theta q}}  < \infty,
	\end{align*}
	and
	\begin{align}\label{A_{7}}
	A_{7} &:=  \left( \int_0^{\infty} \bigg( \int_0^t \bigg( \int_s^{\infty} w 
	\bigg)^{-\frac{\theta}{\theta - p}} w(s) \bigg(\int_s^t v^{\frac{1}{1-p}} 
	\bigg)^{\frac{\theta(1-p)}{\theta - p}} ds \bigg)^{\frac{\theta(q-p)}{p(\theta - q)}}  
	\bigg( \int_t^{\infty} w \bigg)^{-\frac{\theta}{\theta - p}} w(t) \right. \notag\\
	& \hspace{2cm} \times \left. \sup_{z \in (t,\infty)} \bigg(\int_t^z v^{\frac{1}{1-p}} 
	\bigg)^{\frac{\theta(1-p)}{\theta - p}} \bigg( \int_z^{\infty} u 
	\bigg)^{\frac{\theta}{\theta - q}} dt \right)^{\frac{\theta-q}{\theta q}}  < \infty,
	\end{align}
	where $A_4$ is defined in \eqref{A_4}. Moreover, the best constant in \eqref{main inequality} 
	satisfies $C \approx A_4 + A_{6} + A_{7}$.
	
	{\rm (iii)} If $\theta \leq q < 1$, then inequality \eqref{main inequality} holds for 
	all $f \in \mathcal M^+$ if and only if $A_{5} < \infty$,
	\begin{align}\label{A_{8}}
	A_{8} :=  \bigg( \int_0^{\infty} w \bigg)^{-\frac{1}{\theta}}  \bigg( \int_0^{\infty} 
	\bigg(\int_0^t v^{\frac{1}{1-p}} \bigg)^{\frac{q(1-p)}{p(1-q)}} \bigg( \int_t^{\infty} u 
	\bigg)^{\frac{q}{1- q}} u(t) dt \bigg)^{\frac{1-q}{q}}  < \infty,
	\end{align}
	and
	\begin{align*}%\label{A_9}
	A_9 &:=  \sup_{t \in (0,\infty)} \bigg( \int_0^t \bigg( \int_s^{\infty} w \bigg)^{-\frac{\theta}{\theta - p}} w(s) ds \bigg)^{\frac{\theta-p}{\theta p}} \\
	&\hspace{2cm} \times \bigg( \int_t^{\infty} \bigg(\int_t^s v^{\frac{1}{1-p}} 
	\bigg)^{\frac{q(1-p)}{p(1-q)}} \bigg( \int_s^{\infty} u \bigg)^{\frac{q}{1-q}} u(s) ds 
	\bigg)^{\frac{1-q}{q}}  < \infty,
	\end{align*}
	where $A_{5}$ is defined in \eqref{A_{5}}. Moreover, the best constant in \eqref{main 
		inequality} satisfies $C \approx A_{5} + A_{8} + A_9$.
	
	{\rm (iv)} If $\theta < \infty$ and $ q < \min\{1,\theta\}$, then inequality \eqref{main 
		inequality} holds 
	for all$f \in \mathcal M^+$ if and only if  $A_{7} < \infty$, $A_{8} < \infty$ and
	\begin{align*}%\label{A_10}
	A_{10} &:=  \left( \int_0^{\infty} \bigg( \int_0^t \bigg( \int_s^{\infty} w 
	\bigg)^{-\frac{\theta}{\theta - p}} w(s) ds \bigg)^{\frac{\theta(q-p)}{p(\theta - q)}}  
	\bigg( \int_t^{\infty} w \bigg)^{-\frac{\theta}{\theta - p}} w(t) \right. \notag\\
	& \hspace{2cm} \times \left. \bigg( \int_t^{\infty} \bigg(\int_t^s v^{\frac{1}{1-p}} 
	\bigg)^{\frac{q(1-p)}{p(1-q)}} \bigg( \int_s^{\infty} u \bigg)^{\frac{q}{1 - q}} u(s) ds 
	\bigg)^{\frac{\theta(1-q)}{\theta-q}} dt\right)^{\frac{\theta-q}{\theta q}}  < \infty,
	\end{align*}
	where $A_{7}$ and $A_{8}$ are defined in \eqref{A_{7}} and \eqref{A_{8}}, respectively. 
	Moreover, the best constant in \eqref{main inequality} satisfies $C \approx A_{7} + A_{8} + 
	A_{10}$.
\end{theorem}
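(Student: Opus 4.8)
The plan is to follow the duality scheme announced in the introduction; the new feature in this theorem is the strict inequality $p<\theta$, which is exactly what takes us outside the range covered by Theorem~\ref{maintheorem1}. Since \eqref{emb. inequality} reduces to \eqref{main inequality} via $p=p_2/p_1$, $q=q_2/p_1$, $\theta=q_1/p_1$, $v=v_1^{-p_2}v_2^{p_2}$, $u=u_2^{q_2}$, $w=u_1^{q_1}$, it is enough to characterise \eqref{main inequality} under $0<p<\min\{1,q,\theta\}$. First I would pass to the variable $g(t)=\int_0^t f(s)\,ds$, so that $g$ is non-decreasing with $g(0+)=0$ and $f=g'$ a.e.; up to a standard reduction (it suffices to test all non-decreasing $g$ with $g(0+)=0$, not only the absolutely continuous ones) \eqref{main inequality} becomes
\[
\Big(\int_0^\infty\Big(\int_0^t g'(s)^p v(s)\,ds\Big)^{q/p}u(t)\,dt\Big)^{1/q}\le C\Big(\int_0^\infty g(t)^\theta w(t)\,dt\Big)^{1/\theta}.
\]
The two standing hypotheses on the weights, $\int_t^\infty w<\infty$ for all $t$ and the positivity and finiteness of the displayed integral, are precisely what keeps the ``local'' reverse Hardy constant of the inner operator finite and positive on each interval $(0,t)$, so that the component inequalities below are non-degenerate.

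The heart of the matter is to decouple the two nested operators. Since $p<1$, reverse H\"older's inequality on $(0,t)$ gives the pointwise bound
\[
\Big(\int_0^t g'(s)^p v(s)\,ds\Big)^{1/p}\le g(t)\Big(\int_0^t v^{\frac{1}{1-p}}\Big)^{\frac{1-p}{p}},
\]
which, together with the test function $f=g'$ with $g'(s)=v(s)^{\frac{1}{1-p}}\chi_{(0,x)}(s)$ (making the bound an equality on $(0,x)$), explains the ``diagonal'' conditions $A_4$ and $A_5$. For the sharp characterisation, however, I would linearise the inner term: since $q/p>1$, dualising the outer integral in $L^{q/p}$ and using Fubini's theorem shows that \eqref{main inequality} holds with constant $C$ if and only if the family of \emph{reverse Hardy-type inequalities}
\[
\Big(\int_0^\infty f(s)^p\,v(s)\,\Psi_\psi(s)\,ds\Big)^{1/p}\le C\Big(\int_0^\infty\Big(\int_0^t f\Big)^\theta w\Big)^{1/\theta},\qquad \Psi_\psi(s):=\int_s^\infty\psi(t)u(t)\,dt,
\]
holds for all $f\in\mathcal M^+$ with $C$ independent of $\psi\in\mathcal M^+$ subject to $\int_0^\infty\psi^{(q/p)'}u\le1$. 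The Copson operator $\Psi_\psi$ is the source of the ``iterated Copson-type operators'' flagged in the introduction.

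Next I would insert the known characterisation of the component reverse Hardy-type inequality: for a fixed weight $V$ and $0<p<\min\{1,\theta\}$, the least $\mathcal C$ in $\big(\int_0^\infty f^p V\big)^{1/p}\le \mathcal C\big(\int_0^\infty(\int_0^t f)^\theta w\big)^{1/\theta}$ is equivalent to an explicit functional of $V$ and $w$ built from the weight $\big(\int_s^\infty w\big)^{-\theta/(\theta-p)}w(s)$. Substituting $V=v\,\Psi_\psi$ and demanding that this functional remain bounded over the unit ball $\{\int_0^\infty\psi^{(q/p)'}u\le1\}$ turns the problem into an iterated Hardy-type inequality for $\psi$ with an inner Copson operator, of exactly the kind resolved in \cite{Krepela} and \cite{KrePick}. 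The four sub-cases (i)--(iv) correspond to the four regimes fixed by the position of $q$ relative to $1$ (governing whether the outer quasi-norm in $g$ is a genuine norm, hence whether the duality step is available directly or after a discretisation) and relative to $\theta$ (governing whether the outer weighted inequality for non-decreasing $g$ is controlled by a single supremal condition or needs an integral one); running the cited characterisations in each regime and pulling the resulting conditions back through the substitutions for the weights and parameters produces precisely the lists $\{A_4,A_5\}$, $\{A_4,A_6,A_7\}$, $\{A_5,A_8,A_9\}$, $\{A_7,A_8,A_{10}\}$, along with the stated two-sided estimates for $C$. Necessity is obtained by testing \eqref{main inequality} on the extremal functions of the component reverse Hardy inequality and of the iterated inequalities of \cite{Krepela,KrePick}, suitably transplanted.

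I expect the main obstacle to be the bookkeeping rather than any single sharp estimate: tracking the exponent identities (such as $(q/p)'=q/(q-p)$ and $\theta/(\theta-p)$ and how they interact), verifying that the standing assumptions of the quoted theorems survive each change of weights and parameters, and --- in the sub-cases with $q<1$ --- handling the failure of the triangle inequality in the outer (quasi-)norm, which forces the duality and discretisation steps to be carried out with extra care and is exactly why those cases are listed separately.
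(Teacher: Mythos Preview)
Your overall plan is right and coincides with the paper's: dualise the outer $L^{q/p}$ norm (legitimate since $q/p>1$ throughout), interchange suprema and use Fubini to obtain, for each dual function $h$, the reverse Hardy problem
\[
D(h)=\sup_{f\in\mathcal M^+}\frac{\big(\int_0^\infty f(t)^p\,v(t)\,\int_t^\infty h\,\,dt\big)^{1/p}}{\big(\int_0^\infty(\int_0^t f)^\theta w\big)^{1/\theta}},
\]
apply the reverse Hardy characterisation (Theorem~\ref{Reverse Hardy Inequality p < q}(i), since $p<1$ and $p<\theta$), and then take the supremum over $h$.

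Two points where your narrative diverges from what actually happens. First, because $p<\theta$, Theorem~\ref{Reverse Hardy Inequality p < q}(i) produces \emph{two} summands in $D(h)$, not one; the paper accordingly splits $C\approx C_1+C_2$. The piece $C_1$ contains only a single $\int_\cdot^\infty h$ and is a plain Copson inequality in $h$, handled by Theorem~\ref{Hardy Inequality q<i}; it yields $A_4$ when $q\ge1$ and $A_8$ when $q<1$. The piece $C_2$ is the genuinely iterated object, and Theorem~\ref{KrepelaPick Theorem} is applied to it with the parameter substitutions $m=\tfrac{1}{1-p}$, $q\to\tfrac{\theta}{\theta-p}$, $p\to\tfrac{q}{q-p}$; its four regimes are exactly (i)--(iv) of the statement. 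Second, your account of why the sub-cases split is off: the branch $q\gtrless1$ has nothing to do with whether ``the outer quasi-norm in $g$ is a genuine norm'' or with any discretisation. The duality step is always available because $q/p>1$; the split at $q=1$ arises solely because the Hardy inequality for $h$ changes form precisely when $\tfrac{1}{1-p}$ crosses $\tfrac{q}{q-p}$, and the split at $q=\theta$ comes from the analogous threshold in Theorem~\ref{KrepelaPick Theorem}. No discretisation is used anywhere. The passage to $g=\int_0^\cdot f$ and the reverse H\"older remark are heuristic and play no role in the actual proof.
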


\begin{theorem}\label{maintheorem4}
	Let  $1 <  \min\{q, \theta\}$, $q, \theta < \infty$, and $p=1$. Assume that $u, v \in 
	\mathcal{M}^+$ and $w$ is a weight such that $\int_t^{\infty} w < \infty$ for all $t \in (0, 
	\infty)$. Suppose that $v$ is continuous and
	\begin{equation*}
	0 < \int_0^t v^{\frac{\theta}{\theta -1}} < \infty, \quad 0 < \int_0^t 
	\bigg(\int_x^{\infty} w \bigg)^{-\frac{\theta}{\theta-1}} w(x) dx < \infty, \quad 0 < 
	\int_0^t u^{-\frac{1}{q-1}} < \infty
	\end{equation*}
	holds for all $t \in (0,\infty)$. 
	
	{\rm (i)} If $\theta \leq q$, then inequality \eqref{main inequality} holds for all 
	$f \in \mathcal M^+$ if and only if 
	\begin{align}\label{A_11}
	A_{11} := \bigg(\int_0^{\infty} w \bigg)^{-\frac{1}{\theta}} \sup_{t \in (0,\infty)} \bigg( 
	\int_t^{\infty} u \bigg)^{\frac{1}{q}} \esup_{s \in (0, t)} v(s) < \infty,
	\end{align}
	and
	\begin{align*}%\label{A_12}
	A_{12} := \sup_{t \in (0,\infty)} \bigg( \int_0^t \bigg( \int_x^{\infty} w 
	\bigg)^{-\frac{\theta}{\theta -1}} w(x) \sup_{z \in (x,t)} v(z)^{\frac{\theta}{\theta-1}} dx 
	\bigg)^{\frac{\theta-1}{\theta}} \bigg( \int_t^{\infty} u\bigg)^{\frac{1}{q}}< \infty.
	\end{align*}
	Moreover, the best constant in \eqref{main inequality} satisfies $C \approx A_{11} + A_{12}$.
	
	{\rm (ii)} If $ q < \theta$, then inequality \eqref{main inequality} holds for all $f 
	\in \mathcal M^+$ if and only if $A_{11} < \infty$, 
	\begin{align*}%\label{A_13}
	A_{13} &:=  \left( \int_0^{\infty} \bigg( \int_0^t \bigg( \int_x^{\infty} w 
	\bigg)^{-\frac{\theta}{\theta - 1}} w(x) dx \bigg)^{\frac{\theta(q-1)}{\theta - q}}  \bigg( 
	\int_t^{\infty} w \bigg)^{-\frac{\theta}{\theta - 1}} w(t) \right. \\
	& \hspace{2cm} \times \left. \sup_{z \in (t,\infty)} v(z)^{\frac{\theta q}{\theta -q}} 
	\bigg( \int_z^{\infty} u \bigg)^{\frac{\theta}{\theta - q}}  dt \right)^{\frac{\theta-q}{\theta 
			q}}    < \infty,
	\end{align*}
	and
	\begin{align*}%\label{A_14}
	A_{14} &:=  \left( \int_0^{\infty} \bigg( \int_0^t \bigg( \int_x^{\infty} w 
	\bigg)^{-\frac{\theta}{\theta - 1}} w(x) \sup_{z \in (x,t)} v(z)^{\frac{\theta}{\theta-1}} dx 
	\bigg)^{\frac{\theta(q-1)}{\theta - q}}  \bigg( \int_t^{\infty} w \bigg)^{-\frac{\theta}{\theta 
			- 1}} w(t) \right. \\
	& \hspace{2cm} \times \left. \sup_{z \in (t,\infty)} v(z)^{\frac{\theta}{\theta-1}} \bigg( 
	\int_z^{\infty} u 
	\bigg)^{\frac{\theta}{\theta - q}}  dt \right)^{\frac{\theta-q}{\theta q}}  < \infty,
	\end{align*}
	where $A_{11}$ is defined in \eqref{A_11}. Moreover, the best constant in \eqref{main 
		inequality} satisfies $C \approx A_{11} + A_{13} + A_{14}$.
\end{theorem}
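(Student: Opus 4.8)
The plan is to follow the same scheme as in the proof of Theorem~\ref{maintheorem3}, carrying out the changes forced by the endpoint value $p=1$. By the change of parameters and weights recorded right after \eqref{main inequality} it suffices to characterise \eqref{main inequality} itself; for $p=1$ its left-hand side is $\big(\int_0^{\infty}(\int_0^t fv)^{q}u\big)^{1/q}$, i.e.\ a \emph{weighted} Hardy operator followed by an outer $L^q_u$-norm, while the right-hand side is $\|\int_0^{\cdot}f\|_{L^{\theta}_w}$. The one genuinely new phenomenon relative to Theorem~\ref{maintheorem3} is that now $p'=\infty$: every block which for $p<1$ is an $L^{1/(1-p)}$-average of $v$ degenerates into a running essential supremum of $v$. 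This is exactly why the conditions of Theorem~\ref{maintheorem4} are obtained from those of Theorem~\ref{maintheorem3} by replacing each factor $\big(\int_{\alpha}^{\beta}v^{1/(1-p)}\big)^{(1-p)\kappa}$ by $\esup_{(\alpha,\beta)}v^{\,\kappa}$, and it is also why the hypotheses that $v$ be continuous and that $v^{\theta'}$, $(\int_{\cdot}^{\infty}w)^{-\theta'}w$ and $u^{-1/(q-1)}$ be (positive and) locally integrable are imposed: they are precisely what is needed to invoke the endpoint ($L^{\infty}$-inner) forms of the iterated-inequality results below and to keep the associated running suprema and primitives under control.

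The core is the duality reduction. Since $q>1$, dualise the outer $L^q_u$-norm (equivalently, pass to the K\"othe duals of both Ces\`aro spaces): writing $g\in\mathcal M^+$, $G=\int_{\cdot}^{\infty}g$ and using Fubini in the pairing $\int fvG$, one shows that \eqref{main inequality} with $p=1$ is equivalent to
\[
\big\|v\,G\big\|_{(CL^{\theta}_w)'}\ \le\ C\,\Big(\int_0^{\infty}g(t)^{q'}u(t)^{1-q'}\,dt\Big)^{1/q'}\qquad\text{for all }g\in\mathcal M^+,
\]
where $\|\Phi\|_{(CL^{\theta}_w)'}:=\sup_{f\in\mathcal M^+}\int_0^{\infty}f\Phi\big/\big(\int_0^{\infty}(\int_0^t f)^{\theta}w\big)^{1/\theta}$ is the associate functional of $f\mapsto\|\int_0^{\cdot}f\|_{L^{\theta}_w}$; equivalently, substituting $h=\int_0^{\cdot}f$, it is the supremum of $\int_0^{\infty}\Phi\,dh$ over nondecreasing $h$ with $h(0^+)=0$ and $\|h\|_{L^{\theta}_w}\le1$. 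Because $\theta>1$ and $\int_t^{\infty}w<\infty$ for all $t$, this functional has a known description as a Copson-type norm (this is where $\int_0^t(\int_x^{\infty}w)^{-\theta'}w<\infty$ is used), built from the Hardy averages $\int_0^t\Phi$ weighted against $(\int_t^{\infty}w)^{-\theta'}w(t)\,dt$. Inserting $\Phi=vG$ there turns the displayed inequality into an iterated Hardy-type inequality for $g$, with the running supremum $\esup v$ entering through $vG$; it is of exactly the type characterised in \cite{Krepela} and \cite{KrePick}.

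One then invokes those characterisations, which split according to the position of $q$ relative to $\theta$---and this is precisely the dichotomy of Theorem~\ref{maintheorem4}. When $\theta\le q$ one is in the ``local'' regime, where the characterisation reduces to supremum-type conditions; undoing the substitutions---using the identity $\int_0^t(\int_x^{\infty}w)^{-\theta'}w(x)\,dx\approx(\int_t^{\infty}w)^{1-\theta'}$ (up to its value at $0$), the analogous primitive for $u^{-1/(q-1)}$, and continuity of $v$ to pass freely between $\esup$ over shrinking intervals and $\sup_z$---these collapse to $A_{11}<\infty$ and $A_{12}<\infty$, with $C\approx A_{11}+A_{12}$. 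When $q<\theta$ the characterisation of the iterated inequality itself carries an additional ``integral'' term; after the same unwinding together with a Fubini rearrangement of the doubly nested integrals one is left with $A_{11}<\infty$, $A_{13}<\infty$ and $A_{14}<\infty$, with $C\approx A_{11}+A_{13}+A_{14}$. The two-sided bound on the best constant is read off by tracking equivalence constants through the reduction; the necessity half can also be checked directly in \eqref{main inequality} with explicit test functions---the (regularised) atoms $f\approx\lambda\delta_a$, legitimate because $v$ is continuous, produce $A_{11}$, while the more spread-out test functions assembled from $v^{\theta'}$, $(\int_{\cdot}^{\infty}w)^{-\theta'}w$ and $u^{-1/(q-1)}$ produce $A_{12}$, $A_{13}$, $A_{14}$.

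The main obstacle is twofold. First one must handle the degeneration $p'=\infty$ with care: the iterated inequalities solved in \cite{Krepela,KrePick} are stated for genuine $L^m$-type inner norms, whereas here the relevant inner exponent is $\infty$, so the ``supremum'' versions of those results are needed, and one has to verify that the three local-integrability/positivity hypotheses together with the continuity of $v$ are exactly what makes those versions---and the Copson-type description of $\|\cdot\|_{(CL^{\theta}_w)'}$---applicable. Second, and more laboriously, reducing the rather long conditions that come out of the cited theorems to the compact quantities $A_{11}$--$A_{14}$ is delicate bookkeeping: it uses the two elementary primitive identities repeatedly, the interchange between running suprema of $v$ on nested intervals, and Fubini-type manipulations of the iterated integrals defining $A_{13}$ and $A_{14}$. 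That computation, rather than any isolated conceptual difficulty, is where most of the work lies.
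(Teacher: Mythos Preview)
Your overall strategy---dualise the outer $L^q_u$-norm (valid since $q>1$), compute the inner quantity as a reverse Hardy problem, and then feed the result into an iterated Hardy-type characterisation---is exactly the paper's approach. But the execution of the middle step is wrong, and this propagates.

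You claim that the associate functional $\|\Phi\|_{(CL^{\theta}_w)'}$ is ``built from the Hardy averages $\int_0^t\Phi$ weighted against $(\int_t^{\infty}w)^{-\theta'}w(t)\,dt$''. That is not the correct description. What you are computing is precisely the quantity $D$ in \eqref{D} with $p=1$, i.e.\ the best constant in a \emph{reverse} Hardy inequality with inner exponent $1$ and outer exponent $\theta>1$. By Theorem~\ref{Reverse Hardy Inequality p < q}(ii) this equals (up to constants)
\[
\Big(\int_0^{\infty}\big(\esup_{s\in(t,\infty)}\Phi(s)\big)^{\theta'}\Big(\int_t^{\infty}w\Big)^{-\theta'}w(t)\,dt\Big)^{1/\theta'}
+\Big(\int_0^{\infty}w\Big)^{-1/\theta}\esup_{s\in(0,\infty)}\Phi(s),
\]
so the relevant object is the \emph{running essential supremum} $\esup_{s>t}\Phi(s)$, not the primitive $\int_0^t\Phi$. (A quick sanity check with $\Phi=\chi_{(0,1)}$ and $w=e^{-t}$ shows your formula blows up while the true associate norm is finite.) With $\Phi=vG$, $G=\int_{\cdot}^{\infty}h$, the two terms give $C_3$ and $C_4$; $C_3$ is handled by the endpoint Hardy inequality (Theorem~\ref{Hardy Inequality q=i}) and yields $A_{11}$ directly.

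Consequently the iterated inequality you must characterise for $C_4$ is the \emph{supremal} one,
\[
\Big(\int_0^{\infty}\Big(\sup_{s\in(t,\infty)}v(s)\int_s^{\infty}h\Big)^{\theta'}\Big(\int_t^{\infty}w\Big)^{-\theta'}w(t)\,dt\Big)^{1/\theta'}\le C\Big(\int_0^{\infty}h^{q'}u^{1-q'}\Big)^{1/q'},
\]
and the applicable result is Theorem~\ref{Krepela Theorem} (K\v{r}epela's supremum-operator theorem), not the integral-kernel result of \cite{KrePick}. This is also where the continuity of $v$ and the three local positivity/integrability assumptions are used: they are exactly the non-degeneracy hypotheses of Theorem~\ref{Krepela Theorem}. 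Applying it with exponents $\theta'$ and $q'$ splits according to $\theta\le q$ versus $q<\theta$ and produces $A_{12}$ (resp.\ $A_{13},A_{14}$) \emph{verbatim}. There is no ``laborious bookkeeping'', no primitive identity $\int_0^t(\int_x^{\infty}w)^{-\theta'}w\approx(\int_t^{\infty}w)^{1-\theta'}$, and no Fubini rearrangement needed: the quantities $A_{11}$--$A_{14}$ are read off directly from the cited theorems once the correct reverse-Hardy formula is in place.
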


It should be noted that, using the characterization of the embedding between weighted Ces\`{a}ro 
function spaces, one can obtain the characterization of the embedding between weighted Copson 
function spaces. Indeed, using change of variables $x= 1/t$, it is easy to see that the 
embedding    
\begin{equation*}
\Cop_{p_1, q_1}(u_1, v_1) \hookrightarrow \Cop_{p_2, q_2}(u_2, v_2)
\end{equation*}
is equivalent to the embedding
\begin{equation*}
\Ces_{p_1, q_1}(\tilde u_1, \tilde v_1) \hookrightarrow \Cop_{p_2, q_2}(\tilde u_2, \tilde v_2),
\end{equation*}
where $\tilde u_i(t) = t^{-2/q_i} u_i(1/t)$ and $\tilde v_i(t) = t^{-2/p_i} v_i(1/t)$, $i= 1,2$, 
$t>0$. We will not formulate the results here. 

The paper is orginized as follows. In the second section we present necessary background materials. 
In the third section we prove the main results of this paper. 

\section{Definitions and Preliminaries}\label{sec:prelim}

Now, we will present some background information we need to prove our main results. Let us begin 
with the characterization of the well known Hardy-type inequalities (see, for instance, \cite{ok}, 
Section 1.)  

\begin{theorem}\label{Hardy Inequality q<i}
	Assume that $1 \leq p < \infty$, $0 < q < \infty$ and $v, w \in \mathcal{M}^+$. Let
	\begin{equation*}
	H = \sup_{f\in \mathcal M^+} \ddfrac{\bigg( \int_{0}^{\infty} \bigg( \int_t^{\infty} f(s) 
		ds \bigg)^q w(t) dt \bigg)^{\frac{1}{q}}}{\bigg( \int_0^{\infty} f(t)^p v(t) dt 
		\bigg)^{\frac{1}{p}}}.
	\end{equation*}
	
	{\rm (i)} If $1 < p \leq q $, then $H \approx H_1$, where
	\begin{equation*}
	H_1 = \sup_{t \in (0,\infty)} \bigg( \int_0^t w(s) ds \bigg)^{\frac{1}{q}} \bigg( 
	\int_t^{\infty} v(s)^{1-p'} ds \bigg)^{\frac{1}{p'}}.
	\end{equation*}
	
	{\rm (ii)} If $1 < p$ and $q < p$, then $H \approx H_2$, where
	\begin{equation*}
	H_2 =  \bigg( \int_0^{\infty} \bigg( \int_0^t w(s) ds \bigg)^{\frac{p}{p-q}} \bigg( 
	\int_t^{\infty} v(s)^{1-p'} ds \bigg)^{\frac{p(q-1)}{p-q}} v(t)^{1-p'} dt 
	\bigg)^{\frac{p-q}{pq}}.
	\end{equation*}
	
\end{theorem}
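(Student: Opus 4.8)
The plan is to deduce this classical weighted Hardy inequality for the conjugate (Copson) operator $f \mapsto \int_t^{\infty} f$ from the usual weighted Hardy inequality for $f \mapsto \int_0^x f$ via the substitution $x \mapsto 1/x$. Given $f \in \mathcal M^+$, set $g(x) = x^{-2} f(1/x)$; this is a bijection of $\mathcal M^+$ onto itself, and the substitution $s = 1/x$ gives $\int_t^{\infty} f(s)\,ds = \int_0^{1/t} g(x)\,dx$. Consequently, writing $\widetilde w(x) = x^{-2} w(1/x)$ and $\widetilde v(x) = x^{2p-2} v(1/x)$, one finds $\int_0^{\infty}\big(\int_t^{\infty} f\big)^q w(t)\,dt = \int_0^{\infty}\big(\int_0^x g\big)^q \widetilde w(x)\,dx$ and $\int_0^{\infty} f^p v = \int_0^{\infty} g^p \widetilde v$, so that $H$ is exactly the best constant in the standard Hardy inequality $\big(\int_0^{\infty}(\int_0^x g)^q\widetilde w\,dx\big)^{1/q} \le C\big(\int_0^{\infty} g^p\widetilde v\,dx\big)^{1/p}$.

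Next I would invoke the classical characterizations of this standard inequality (Muckenhoupt--Bradley for $p \le q$, Maz'ya--Rozin for $q < p$; see, e.g., \cite{ok}): for $1 < p \le q$ the best constant is comparable to $\sup_{y>0}\big(\int_y^{\infty}\widetilde w\big)^{1/q}\big(\int_0^y \widetilde v^{1-p'}\big)^{1/p'}$, and for $1 < p$, $q < p$ it is comparable to $\big(\int_0^{\infty}\big(\int_y^{\infty}\widetilde w\big)^{\frac{p}{p-q}}\big(\int_0^y \widetilde v^{1-p'}\big)^{\frac{p(q-1)}{p-q}}\widetilde v(y)^{1-p'}\,dy\big)^{\frac{p-q}{pq}}$. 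For the second case it is essential to use the version of the Maz'ya condition whose outer integrating measure is $\widetilde v^{1-p'}\,dy$ (not $\widetilde w\,dy$); the two versions are equivalent by the integration-by-parts identity $\int_0^{\infty} W^{a}\Phi^{b}\,w = \frac{b}{a+1}\int_0^{\infty} W^{a+1}\Phi^{b-1}\,\phi$ valid for $W(y) = \int_y^{\infty} w$, $\Phi(y) = \int_0^y\phi$, $a+1>0$, $b>0$. Finally, undoing the substitution ($s = 1/x$ again, together with $(x^{2p-2})^{1-p'} = x^{-2}$) converts $\int_y^{\infty}\widetilde w$ into $\int_0^{1/y} w$ and $\int_0^y \widetilde v^{1-p'}$ into $\int_{1/y}^{\infty} v^{1-p'}$; after relabelling $t = 1/y$ the resulting quantities are precisely $H_1$ in case (i) and $H_2$ in case (ii), and the comparability constants are unaffected.

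If one prefers a self-contained argument, then for $p \le q$ the sufficiency of $H_1 < \infty$ follows by integrating $\int_0^{\infty} F^q\,dW$ by parts with $F(t) = \int_t^{\infty} f$ and $W(t) = \int_0^t w$ (justified after a truncation and monotone convergence, the boundary terms vanishing), bounding $F(t)^{q-1}$ by H\"older's inequality, and summing a geometric series over the level sets $\{t : W(t) \approx 2^k\}$; necessity follows by testing with $f = v^{1-p'}\chi_{(\tau,\infty)}$. The case $q < p$ requires in addition H\"older's inequality with exponent $\frac{p}{p-q}$ after discretizing. I expect the only genuinely delicate point to be this $q < p$ case --- in particular, identifying which of the several equivalent forms of the Maz'ya condition is the one that transforms under $x \mapsto 1/x$ into $H_2$ exactly as written above (the form weighted by $v(t)^{1-p'}\,dt$) --- whereas the $p \le q$ case and all of the change-of-variables bookkeeping are routine.
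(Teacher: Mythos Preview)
The paper does not prove this statement at all: it is quoted in Section~\ref{sec:prelim} as a preliminary fact, with the reference ``see, for instance, \cite{ok}, Section~1'' and no further argument. So there is nothing to compare your argument against; your sketch is simply supplying what the paper takes for granted.

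That said, your sketch is correct. The reduction via $x\mapsto 1/x$ is the standard device for passing between the Hardy and Copson operators, and the bookkeeping you give ($\widetilde w(x)=x^{-2}w(1/x)$, $\widetilde v(x)=x^{2p-2}v(1/x)$, whence $\widetilde v^{\,1-p'}(x)=x^{-2}v(1/x)^{1-p'}$) is right. In case~(ii) you correctly single out the Maz'ya--Rozin form with integrating measure $\widetilde v^{\,1-p'}\,dy$ and exponents $r/q$, $r/q'$: this is indeed the form that transforms into $H_2$ under the substitution, and it is equivalent to the companion form with measure $\widetilde w\,dy$ and exponents $r/p$, $r/p'$ via exactly the integration-by-parts identity you wrote (using $r/q-1=r/p$ and $r/q'+1=r/p'$). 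The ``self-contained'' outline you add for $p\le q$ is also standard. There is no gap.
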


\begin{theorem}\label{Hardy Inequality q=i}
	Assume that $1 < p < \infty$ and $v, w \in \mathcal{M}^+$. Let
	\begin{equation*}
	H = \sup_{f\in \mathcal M^+} \ddfrac{\esup_{t\in (0,\infty)} \bigg( \int_t^{\infty} f(s) ds 
		\bigg) w(t)}{\bigg( \int_0^{\infty} f(t)^p v(t) dt \bigg)^{\frac{1}{p}}}.
	\end{equation*}
	Then $H \approx H_5$, where
	\begin{equation*}
	H_3 = \sup_{t \in (0,\infty)} \bigg( \esup_{s \in (0, t)} w(s) \bigg) \bigg( 
	\int_t^{\infty} v(s)^{1-p'} ds \bigg)^{\frac{1}{p'}}.
	\end{equation*}
\end{theorem}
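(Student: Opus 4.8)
The plan is to treat this as the limiting case $q \to \infty$ of the classical Hardy inequality for the Copson operator $f \mapsto \int_t^\infty f$, and to prove the two inequalities $H \lesssim H_3$ and $H_3 \lesssim H$ separately by elementary means. Throughout I would use the convention $1 - p' = \frac{1}{1-p}$ and I would assume without loss of generality that $v^{1-p'}$ is locally integrable on $(0,\infty)$ with $\int_t^\infty v^{1-p'} < \infty$ for some (hence all large) $t$, since otherwise both sides are handled by trivial/degenerate considerations.

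First I would prove the necessity bound $H_3 \lesssim H$. Fix $x \in (0,\infty)$ and fix any $y > x$ with $0 < \int_y^\infty v^{1-p'} < \infty$. As a test function take $f = v^{1-p'} \chi_{(y,\infty)}$, or rather $f_y = v^{1-p'}\chi_{(y,Y)}$ with $Y$ large and then let $Y \to \infty$. For the denominator one computes $\int_0^\infty f_y^p v = \int_y^\infty v^{(1-p')p} v = \int_y^\infty v^{1-p'}$, using $(1-p')p = 1-p' $ (indeed $(1-p')p - 1 = \frac{p}{1-p} - 1 = \frac{1}{1-p} = 1-p'$ — I would double check this identity carefully, it is the usual one). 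For the numerator, for any $s \le x$ we have $\int_s^\infty f_y = \int_y^\infty v^{1-p'}$, so $\esup_{t \in (0,\infty)} w(t)\int_t^\infty f_y \ge w(s)\int_y^\infty v^{1-p'}$ for a.e. $s \in (0,x)$, whence the numerator is at least $\big(\esup_{s\in(0,x)} w(s)\big)\int_y^\infty v^{1-p'}$. Dividing, $H \ge \big(\esup_{s\in(0,x)} w(s)\big)\big(\int_y^\infty v^{1-p'}\big)^{1-1/p} = \big(\esup_{s\in(0,x)} w(s)\big)\big(\int_y^\infty v^{1-p'}\big)^{1/p'}$. Taking the supremum over $y > x$ and then over $x$ gives $H_3 \lesssim H$ (in fact with constant $1$).

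Next I would prove sufficiency, $H \lesssim H_3$. Assume $H_3 < \infty$. The key pointwise estimate is: for a.e. $t$,
\begin{equation*}
w(t)\int_t^\infty f(s)\,ds \;\le\; \Big(\esup_{s\in(0,t)} w(s)\Big)\int_t^\infty f(s)\,ds.
\end{equation*}
Now apply Hölder's inequality in the form $\int_t^\infty f = \int_t^\infty f \, v^{1/p} v^{-1/p} \le \big(\int_t^\infty f^p v\big)^{1/p}\big(\int_t^\infty v^{1-p'}\big)^{1/p'} \le \|f\|_{L^p_v}\big(\int_t^\infty v^{1-p'}\big)^{1/p'}$. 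Combining, $w(t)\int_t^\infty f \le \|f\|_{L^p_v}\,\big(\esup_{s\in(0,t)} w(s)\big)\big(\int_t^\infty v^{1-p'}\big)^{1/p'} \le \|f\|_{L^p_v}\,H_3$. Taking the essential supremum over $t$ yields the numerator $\le H_3\,\|f\|_{L^p_v}$, i.e. $H \le H_3$.

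The argument is short and the only genuine subtlety — the step I would be most careful with — is the degenerate case analysis in the necessity direction: one must check that $H_3 < \infty$ forces $\int_t^\infty v^{1-p'} < \infty$ for all $t>0$ (else pick $t$ large where $\esup_{s\in(0,t)}w(s)>0$, or handle $\esup w \equiv 0$ separately), and that when $\int_t^\infty v^{1-p'} = +\infty$ on a set the test functions must be suitably truncated and a limiting argument used; these are routine but need stating. I would also note the harmless typo that the theorem statement writes ``$H \approx H_5$'' but defines $H_3$; the intended constant is $H_3$.
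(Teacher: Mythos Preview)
The paper does not give its own proof of this theorem: it is listed in Section~\ref{sec:prelim} among the known preliminaries, under the blanket reference ``see, for instance, \cite{ok}, Section~1'' that precedes Theorem~\ref{Hardy Inequality q<i}. There is therefore nothing in the paper to compare your argument against.

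On its own merits your proof is correct and is the standard direct one. Two small points deserve a cleaner write-up. In the sufficiency direction, the inequality $w(t)\le \esup_{s\in(0,t)} w(s)$ for a.e.\ $t$ is true but not completely automatic, since the essential supremum is taken over the \emph{open} interval $(0,t)$; a one-line justification (e.g.\ for each rational $c$ the set $\{t:\ w(t)>c\ge \esup_{(0,t)}w\}$ is null, because any two of its points $t_1<t_2$ force $w(t_1)\le c$ a.e.) should be added. In the necessity direction your displayed identity is garbled: ``$(1-p')p=1-p'$'' is false, and the parenthetical check ``$(1-p')p-1=\tfrac{p}{1-p}-1=\tfrac{1}{1-p}$'' is also wrong. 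The correct identity is $p(1-p')+1=1-p'$, which is exactly what gives $\int f_y^{p}\,v=\int_y^{Y} v^{1-p'}$; your conclusion is right, only the bookkeeping needs fixing. Your remark about the $H_5$/$H_3$ typo in the statement is accurate.
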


Let us now recall the characterizations of reverse Hardy-type inequalities. 

\begin{theorem}\cite[Theorem 5.1]{ego2008} \label{Reverse Hardy Inequality q < p}
	Assume that $0 < q \leq p \leq 1$. Suppose that  $v, w \in \mathcal{M}^+$ such that $w$ 
	satisfies $\int_t^{\infty} w < \infty$ for all $t \in (0,\infty)$. Let 
	\begin{equation}\label{Rev.Hardy inequality R}
	R = \sup_{f\in \mathcal M^+} \ddfrac{\bigg( \int_0^{\infty} f(t)^p v(t) dt 
		\bigg)^{\frac{1}{p}}}{ \bigg( \int_{0}^{\infty} \bigg( \int_0^t f(s) ds \bigg)^q w(t) dt 
		\bigg)^{\frac{1}{q}}}.
	\end{equation}
	
	\rm{(i)} If $p < 1$, then $R \approx R_1$, where
	\begin{equation*}
	R_1 = \sup_{t \in (0,\infty)} \bigg(\int_t^{\infty} w(s) ds \bigg)^{-\frac{1}{q}} 
	\bigg(\int_t^{\infty} v(s)^{\frac{1}{1-p}} ds\bigg)^{\frac{1-p}{p}}.
	\end{equation*}
	
	\rm{(ii)} If $p=1$, then $R \approx R_2$, where
	\begin{equation*}
	R_2 = \sup_{t \in (0,\infty)} \bigg(\int_t^{\infty} w(s) ds \bigg)^{-\frac{1}{q}} 
	\bigg(\esup_{s \in (t,\infty)} v(s) \bigg).
	\end{equation*}
\end{theorem}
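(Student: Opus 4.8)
The plan is to prove, for each part, the two estimates $R\lesssim R_1$ (resp. $R\lesssim R_2$) and $R\gtrsim R_1$ (resp. $R\gtrsim R_2$) separately; combined they give the asserted equivalences. Throughout I would write $F(t):=\int_0^t f$ and $W(t):=\int_t^\infty w$, so that $W$ is finite, non-increasing and continuous on $(0,\infty)$. The degenerate configurations ($W(t_0)=0$ for some $t_0$, or $v^{1/(1-p)}\notin L^1_{\mathrm{loc}}(0,\infty)$, or $F(t_0)=\infty$ for some $t_0$ with $W(t_0)>0$) force both $R$ and $R_1$ (resp. $R_2$) to be infinite, and are disposed of at once through the stated conventions; hence I may and do assume $0<W(t)<\infty$ for all $t$, that $F$ is finite and locally absolutely continuous, and — when $R_1<\infty$ — that $v^{1/(1-p)}\in L^1_{\mathrm{loc}}$.

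\emph{Necessity.} For part (i), fixing $0<a<b<\infty$ and testing \eqref{Rev.Hardy inequality R} with the Hölder extremiser $f=v^{1/(1-p)}\chi_{(a,b)}$ gives $\int_0^\infty f^p v=\int_a^b v^{1/(1-p)}$, while $F\le\int_a^b v^{1/(1-p)}$ everywhere and $F\equiv0$ on $(0,a)$, so $\int_0^\infty F^q w\le\big(\int_a^b v^{1/(1-p)}\big)^q W(a)$; dividing, letting $b\uparrow\infty$ and taking the supremum over $a$ yields $R\ge R_1$. For part (ii) I would test instead with indicators $\chi_{(c,d)}$ of short intervals $(c,d)$ on which the mean of $v$ is close to $\esup_{(c,\infty)}v$; since $F\le d-c$ everywhere and vanishes on $(0,c)$, this produces $R\ge c_0\,W(c)^{-1/q}\esup_{(c,\infty)}v$ for a universal $c_0>0$, and taking the supremum over $c$ (using monotonicity and right-continuity of $W$) gives $R\gtrsim R_2$.

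\emph{Sufficiency.} Assume $R_1<\infty$. Since $W$ is continuous and non-increasing, I would pick a sequence $\{x_k\}_{k\in K}$, $K\subseteq\mathbb Z$ an interval, with $W(x_k)=2^{-k}$; the intervals $(x_k,x_{k+1})$ then partition $(0,\infty)$ (boundary intervals at the ends, if any, are treated identically) and satisfy $\int_{x_k}^{x_{k+1}}w=2^{-k-1}$. On each of them, Hölder's inequality with exponent $1/p\ge1$ and the definition of $R_1$ give
\[
\int_{x_k}^{x_{k+1}} f^p v \;\le\; \big(F(x_{k+1})-F(x_k)\big)^{p}\Big(\int_{x_k}^{\infty} v^{1/(1-p)}\Big)^{1-p} \;\le\; R_1^{p}\,2^{-kp/q}\,\big(F(x_{k+1})-F(x_k)\big)^{p},
\]
whereas monotonicity of $F$ gives $\int_0^\infty F^q w\ge\sum_k F(x_k)^q\,2^{-k-1}$. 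Writing $a_k:=F(x_{k+1})-F(x_k)$, $A_k:=F(x_k)=\sum_{j<k}a_j$, $b_k:=2^{-k/q}$ (so $b_k=2^{1/q}b_{k+1}$), the bound $a_k\le A_{k+1}$ gives $a_kb_k\le 2^{1/q}A_{k+1}b_{k+1}$, whence, using that $q\le p$ implies $\|\cdot\|_{\ell^p}\le\|\cdot\|_{\ell^q}$,
\begin{align*}
\sum_k (a_kb_k)^p
&\le 2^{p/q}\sum_k (A_{k+1}b_{k+1})^p = 2^{p/q}\sum_k (A_kb_k)^p \\
&\le 2^{p/q}\Big(\sum_k (A_kb_k)^q\Big)^{p/q} \le 2^{p/q}\Big(2\int_0^\infty F^q w\Big)^{p/q}.
\end{align*}
Summing the first display over $k$ and inserting this estimate gives $\int_0^\infty f^p v\le 4^{p/q}R_1^p\big(\int_0^\infty F^q w\big)^{p/q}$, i.e.\ $R\le 4^{1/q}R_1$. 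Part (ii) is the same with Hölder on each interval replaced by $\int_{x_k}^{x_{k+1}}fv\le R_2\,2^{-k/q}\big(F(x_{k+1})-F(x_k)\big)$ and with the embedding $\|\cdot\|_{\ell^1}\le\|\cdot\|_{\ell^q}$ (valid since $q\le1$) used in the last step.

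\emph{Expected main obstacle.} The analytic heart of the argument — the chain of estimates above — is elementary once the partition is in place; the real care goes into the discretization bookkeeping: pinning down the index set $K$ and the behaviour at the two ends of $(0,\infty)$ (whether $W(0^+)$ is finite, whether $W(\infty)=0$), the continuity of $W$ used to solve $W(x_k)=2^{-k}$, and a clean verification that every degenerate case indeed forces both sides to be infinite. A secondary point is checking that the pointwise supremum arising in the $p=1$ necessity step really coincides with $R_2$, which again relies on the monotonicity and right-continuity of $W$.
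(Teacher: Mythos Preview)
The paper does not prove this statement; Theorem~\ref{Reverse Hardy Inequality q < p} is quoted from \cite{ego2008} as a preliminary tool and no argument for it is supplied anywhere in the present paper. There is therefore no proof here against which to compare your proposal.

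On its own merits your argument is correct. The discretization via level sets $W(x_k)=2^{-k}$ of $W(t)=\int_t^\infty w$, followed by H\"older on each block and the sequence-space embedding $\ell^q\hookrightarrow\ell^p$ (valid since $q\le p$), is a standard and efficient route to reverse Hardy inequalities of this type and is in the spirit of the techniques used in the literature on such inequalities. Two small points that would each want one extra line in a full write-up: (a) in the $p=1$ necessity step you can dispense with intervals and Lebesgue differentiation by testing with $f=\chi_E$ for a measurable $E\subset(t,\infty)$ of finite positive measure on which $v>M-\varepsilon$, which avoids any implicit local-integrability hypothesis on $v$; (b) when $W$ is not strictly decreasing the equation $W(x)=2^{-k}$ has an interval of solutions, but any choice works since $\int_{x_k}^{x_{k+1}}w=W(x_k)-W(x_{k+1})=2^{-k-1}$ regardless, and the boundary case $W(0^+)<\infty$ simply adds one initial block on which $W\approx W(0^+)$, handled by the same estimate.
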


\begin{theorem}\cite[Theorem 5.4]{ego2008} \label{Reverse Hardy Inequality p < q}
	Assume that $0 < p \leq 1$ and $p < q < \infty$. Suppose that  $v, w \in \mathcal{M}^+$ such 
	that $w$ satisfies $\int_t^{\infty} w < \infty$ for all $t \in (0, \infty)$ and $w \neq 0$ a.e. 
	on $(0, \infty)$. Let $R$ be defined by \eqref{Rev.Hardy inequality R}.
	
	\rm{(i)} If $p < 1$, then $R \approx R_3$, where
	\begin{align*}
	R_3 & = \bigg( \int_0^{\infty} \bigg( \int_t^{\infty} v(s)^{\frac{1}{1-p}} ds 
	\bigg)^{\frac{q(1-p)}{q-p}} \bigg(\int_t^{\infty} w(s) ds\bigg)^{-\frac{q}{q-p}} w(t) 
	dt\bigg)^{\frac{q-p}{qp}} \\
	&\hspace{2cm} + \bigg( \int_0^{\infty} v(s)^{\frac{1}{1-p}} ds \bigg)^{\frac{1-p}{p}} 
	\bigg(\int_0^{\infty} w(s) ds\bigg)^{-\frac{1}{q}}.
	\end{align*}
	
	\rm{(ii)} If $p=1$, then $R \approx R_4$, where
	\begin{align*}
	R_4 & = \bigg( \int_0^{\infty} \bigg( \esup_{s \in (t,\infty)} v(s)^{\frac{q}{q-1}} \bigg) 
	\bigg(\int_t^{\infty} w(s) ds\bigg)^{-\frac{q}{q-1}} w(t) dt\bigg)^{\frac{q-1}{q}} \\
	&\hspace{2cm} + \bigg( \esup_{s \in (0,\infty)} v(s)\bigg) \bigg(\int_0^{\infty} w(s) 
	ds\bigg)^{-\frac{1}{q}}.
	\end{align*}
\end{theorem}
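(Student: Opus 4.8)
The statement is a reverse Hardy inequality, so the plan is to establish the two bounds $R\lesssim R_3$ (sufficiency) and $R\gtrsim R_3$ (necessity) separately, and to treat part~(ii), $p=1$, as the limiting case of the argument for $p<1$ with $\bigl(\int_I v^{\frac1{1-p}}\bigr)^{1-p}$ replaced throughout by $\esup_I v$ and the corresponding test functions by approximate maximizers of $v$. Write $Hf(t)=\int_0^t f$ and $W(t)=\int_t^\infty w$; the only structural fact I use is that $Hf$ is nondecreasing. I would fix once and for all a partition $\{x_k\}_{k\ge k_0}$ of $(0,\infty)$ adapted to $W$, with $W(x_k)\approx 2^{-kq}$, $x_{k_0}=0$ and $2^{-k_0q}\approx W(0)$; then $W$ is comparable to a geometric sequence along $\{x_k\}$ and $W(x_k)-W(x_{k+1})\approx W(x_k)$.

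For sufficiency I would fix $f\in\mathcal M^+$, set $B=\bigl(\int_0^\infty (Hf)^q w\bigr)^{1/q}$, and work block by block. On $I_k=(x_k,x_{k+1})$, Hölder's inequality with the conjugate exponents $\frac1p$ and $\frac1{1-p}$ (using $0<p<1$) gives $\int_{I_k}f^p v\le\bigl(\int_{I_k}f\bigr)^p\bigl(\int_{I_k}v^{\frac1{1-p}}\bigr)^{1-p}$; since $\int_{I_k}f\le Hf(x_{k+1})$ and $\int_{I_k}v^{\frac1{1-p}}\le\int_{x_k}^\infty v^{\frac1{1-p}}=:V_k$, summation yields $\int_0^\infty f^p v\lesssim\sum_k Hf(x_{k+1})^p V_k^{1-p}$. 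Writing the $k$-th summand, up to constants, as $\bigl(Hf(x_{k+1})\,2^{-(k+1)}\bigr)^p\cdot\bigl(2^{kp}V_k^{1-p}\bigr)$ and applying the discrete Hölder inequality with exponents $\frac qp$ and $\frac q{q-p}$ — both $>1$ because $p<q$ — splits the sum into $\bigl(\sum_k(Hf(x_k)2^{-k})^q\bigr)^{\frac pq}\cdot\bigl(\sum_k(2^{kp}V_k^{1-p})^{\frac q{q-p}}\bigr)^{\frac{q-p}q}$. The first factor is $\lesssim B^p$, since monotonicity of $Hf$ gives $\sum_k Hf(x_k)^q W(x_k)\lesssim\int_0^\infty(Hf)^q w=B^q$ while $W(x_k)\approx 2^{-kq}$. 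The second factor I would bound by anti-discretization — comparing the geometric sums $\sum_k 2^{\frac{kpq}{q-p}}$ with the telescoped integrals $\int_{x_{k-1}}^{x_k}W^{-\frac q{q-p}}w$ — obtaining a bound by the first term of $R_3$ plus the isolated $k=k_0$ term $\approx\bigl(\int_0^\infty v^{\frac1{1-p}}\bigr)^{1-p}W(0)^{-\frac pq}$, which is the $p$-th power of the second term of $R_3$. Altogether $\int_0^\infty f^p v\lesssim B^p R_3^p$, i.e.\ $R\lesssim R_3$.

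For necessity the second term of $R_3$ comes from the single test function $f=v^{\frac1{1-p}}$ (truncated to $(0,a)$, then $a\to\infty$): then $\int_0^\infty f^p v=\int_0^\infty v^{\frac1{1-p}}=:V(0)$ while $Hf\le V(0)$, hence $\int_0^\infty(Hf)^q w\le V(0)^q W(0)$ and $R\ge V(0)^{\frac1p-1}W(0)^{-\frac1q}$. The first term comes from superposition: with the same $\{x_k\}$, take $f=\sum_k c_k\,v^{\frac1{1-p}}\mathbf{1}_{I_k}$ with $c_k\ge 0$ chosen so that $c_k\int_{I_k}v^{\frac1{1-p}}$ realizes the successive increments of the profile dictated by the equality case of the discrete Hölder step above, truncated where necessary so that the induced values $Hf(x_k)$ stay nondecreasing; this choice saturates, up to constants, every inequality used in the sufficiency argument, giving $R\gtrsim R_3$. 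The case $p=1$ is identical, the block estimate being the trivial $\int_{I_k}fv\le(\esup_{I_k}v)\int_{I_k}f$.

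The step I expect to be the main obstacle is the anti-discretization, i.e.\ proving — in both directions and uniformly in $u,v,w$ — that the partition sums above are comparable to the integral term of $R_3$ (resp.\ $R_4$); this is where the hypotheses $p<q$, $w\not\equiv 0$ and $\int_t^\infty w<\infty$ are essential, and where the dichotomy $q\le p$ versus $p<q$ manifests: in the range $q\le p$ the sums collapse to a single supremum and one recovers Theorem~\ref{Reverse Hardy Inequality q < p}. A secondary technical nuisance is the truncation in the necessity part needed to repair the possible non-monotonicity of the optimal test-function profile; both are, however, standard maneuvers in this area.
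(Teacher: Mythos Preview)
The paper does not prove this statement at all: Theorem~\ref{Reverse Hardy Inequality p < q} is quoted verbatim from \cite[Theorem~5.4]{ego2008} and used as a black box in the proofs of Theorems~\ref{maintheorem3} and~\ref{maintheorem4}. There is therefore no proof in the paper to compare your proposal against.

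That said, your sketch is a faithful outline of the discretization argument that underlies the result in \cite{ego2008}. The decomposition by level sets of $W(t)=\int_t^\infty w$ with $W(x_k)\approx 2^{-kq}$, the blockwise reverse H\"older estimate (valid because $0<p<1$), the discrete H\"older split with exponents $\tfrac{q}{p}$ and $\tfrac{q}{q-p}$ (valid because $p<q$), and the anti-discretization back to the integral form of $R_3$ are exactly the ingredients one expects. Your identification of the endpoint term $k=k_0$ as the source of the second summand in $R_3$ is correct, and your test functions for necessity --- $f=v^{1/(1-p)}$ for the second term and a weighted superposition $\sum_k c_k v^{1/(1-p)}\mathbf{1}_{I_k}$ saturating the discrete H\"older for the first --- are the standard choices.

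Two minor cautions. First, the anti-discretization step in the necessity direction (showing the discrete sum \emph{dominates} the integral, not just vice versa) needs a little more care than you indicate: one typically uses that $V(t)=\int_t^\infty v^{1/(1-p)}$ is monotone together with the geometric structure of $\{W(x_k)\}$, and the argument is not entirely symmetric with the sufficiency direction. Second, in the necessity construction the ``truncation needed to repair non-monotonicity'' is not actually an issue here, since $Hf$ is automatically nondecreasing for any $f\ge 0$; what does require attention is ensuring that $\int_0^\infty (Hf)^q w$ is controlled from above by the intended discrete quantity, which uses that on $(x_k,x_{k+1})$ one has $Hf\le Hf(x_{k+1})$ and $\int_{x_k}^{x_{k+1}} w\approx W(x_k)$. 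None of this is a gap, but in a full write-up these are the places where the constants have to be tracked.
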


\begin{theorem}\cite[Theorem 1.1]{KrePick} \label{KrepelaPick Theorem}
	Let $1 < p < \infty$ and $0 < q, m < \infty $ and define $r:= \frac{pq}{p-q}$. Assume that  $u, 
	v, w \in \mathcal{M}^+$ such that 
	\begin{equation*}
	0 < \bigg(\int_0^t \bigg(\int_s^t u\bigg)^{\frac{q}{m}} w(s) ds\bigg)^{\frac{1}{q}} < \infty
	\end{equation*}
	for all $t \in (0, \infty)$. Let
	\begin{equation*}
	I = \sup_{f\in \mathcal M^+} \ddfrac{\bigg( \int_0^{\infty} \bigg(\int_t^{\infty} \bigg( 
		\int_s^{\infty} f\bigg)^m u(s) ds \bigg)^{\frac{q}{m}} w(t) dt\bigg)^{\frac{1}{q}}}
	{\bigg(\int_0^{\infty} f(t)^p v(t) dt\bigg)^{\frac{1}{p}}}.
	\end{equation*}
	
	{\rm (i)} If $p \leq \min\{m, q\}$, then $I \approx I_1$, where 
	\begin{equation}\label{I1}
	I_1 := \sup_{t \in (0,\infty)} \bigg( \int_0^t w(s) \bigg(\int_s^t u \bigg)^{\frac{q}{m}} 
	ds \bigg)^{\frac{1}{q}} \bigg(\int_t^{\infty} v^{1-p'} \bigg)^{\frac{1}{p'}}.
	\end{equation}
	
	{\rm (ii)} If $q < p \leq m$, then $ I \approx I_2 + I_3$, where
	\begin{equation*}
	I_2 := \bigg( \int_0^{\infty} \bigg( \int_0^t w\bigg)^{\frac{r}{p}} w(t) \sup_{z \in 
		(t,\infty)} \bigg(\int_t^z u \bigg)^{\frac{r}{m}} \bigg( \int_z^{\infty} v^{1-p'}  
	\bigg)^{\frac{r}{p'}} dt \bigg)^{\frac{1}{r}},
	\end{equation*}
	and 
	\begin{align}\label{I3}
	I_3 &:= \bigg( \int_0^{\infty} \sup_{z \in (t,\infty)} \bigg(\int_t^z u 
	\bigg)^{\frac{q}{m}} \bigg(\int_z^{\infty} v^{1-p'} \bigg)^{\frac{r}{p'}} \bigg.\notag \\
	&\hspace{2cm} \times\bigg.\bigg( \int_0^t w(s) \bigg(\int_s^t u\bigg)^{\frac{q}{m}} ds 
	\bigg)^{\frac{r}{p}} w(t) dt \bigg)^{\frac{1}{r}}.
	\end{align}
	
	{\rm (iii)}  If $ m < p \leq q$, then  $I \approx I_1 + I_4$, where $I_1$ is defined in 
	\eqref{I1} and
	\begin{align*}
	I_4  := \sup_{t \in (0,\infty)} \bigg( \int_0^t w \bigg)^{\frac{1}{q}} 
	\bigg(\int_t^{\infty} \bigg(\int_t^s u \bigg)^{\frac{p}{p-m}} \bigg(\int_s^{\infty} 
	v^{1-p'}  \bigg)^{\frac{p(m-1)}{p-m}} v(s)^{1-p'} ds \bigg)^{\frac{p-m}{pm}}.
	\end{align*}
	
	{\rm (iv)} If $\max\{m,q\} < p$ then $I \approx I_3 + I_5$, where $I_3$ is defined in 
	\eqref{I3} and
	\begin{align*}
	I_5 &:= \bigg( \int_{0}^{\infty} \bigg( \int_0^t w \bigg)^{\frac{r}{p}} w(t) \bigg.\\
	&\hspace{1cm}\times \bigg. \bigg(\int_t^{\infty} \bigg(\int_t^s u \bigg)^{\frac{p}{p-m}} 
	\bigg(\int_s^{\infty} v^{1-p'}  \bigg)^{\frac{p(m-1)}{p-m}} v(s)^{1-p'} ds 
	\bigg)^{\frac{q(p-m)}{m(p-q)}} dt \bigg)^{\frac{1}{r}}.
	\end{align*}
\end{theorem}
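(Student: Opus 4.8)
\textbf{Proof plan for Theorem \ref{KrepelaPick Theorem}.}

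The plan is to transfer the statement into a form where the interior Copson operator $f \mapsto \int_s^\infty f$ can be handled by a known weighted Hardy inequality, and the remaining (outer) integral is an iterated structure that decouples. First I would make the substitution $g = f$ and write $F(s) = \int_s^\infty f(t)\,dt$, so $F$ is a non-increasing, locally absolutely continuous function with $F(\infty)=0$ and $-F' = f$. The supremum $I$ then becomes the best constant in
\begin{equation*}
\bigg(\int_0^\infty \bigg(\int_t^\infty F(s)^m u(s)\,ds\bigg)^{\frac{q}{m}} w(t)\,dt\bigg)^{\frac1q} \le I \bigg(\int_0^\infty (-F'(t))^p v(t)\,dt\bigg)^{\frac1p}
\end{equation*}
taken over all such $F$. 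The standard device (cf. the reduction theorems of Gogatishvili--Pick and Sinnamon for inequalities restricted to monotone functions) is to pass from the class of non-increasing $F$ to the class of all non-negative $h$ via $F(s) = \int_s^\infty h$ when $p>1$; this is exactly the place where the hypothesis $1<p<\infty$ and the auxiliary finiteness condition $0 < \big(\int_0^t (\int_s^t u)^{q/m} w(s)\,ds\big)^{1/q} < \infty$ enter, guaranteeing the non-degeneracy needed for the discretization/antidiscretization machinery.

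The core of the argument is then to characterize the two-operator (iterated) inequality
\begin{equation*}
\bigg(\int_0^\infty \bigg(\int_t^\infty \bigg(\int_s^\infty h\bigg)^m u(s)\,ds\bigg)^{\frac{q}{m}} w(t)\,dt\bigg)^{\frac1q} \le I \bigg(\int_0^\infty h(t)^p v(t)\,dt\bigg)^{\frac1p}.
\end{equation*}
I would treat the outer functional $G(s) := \int_s^\infty h$ and split according to the relative sizes of $p$, $m$, $q$. The governing principle is: the inner Copson operator $h \mapsto \int_s^\infty h$ is controlled, against the measure $\big(\int_t^\infty(\cdot)^m u\big)^{q/m} w(t)\,dt$, by a supremum-type "fundamental function" of $v$, namely $\big(\int_\cdot^\infty v^{1-p'}\big)^{1/p'}$; and the outer operator $g \mapsto \int_\cdot^\infty g(s)^m u(s)\,ds$ composed with the outer integration is a Hardy-type operator whose behaviour is dictated by comparing $q/m$ with $1$ and $q$ with $p$. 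Concretely: (a) when $p \le \min\{m,q\}$ both operators are "subcritical" in the direction that makes a single supremum condition $I_1$ suffice — here one uses the Hardy inequality Theorem \ref{Hardy Inequality q<i}(i) (in the form $1<p\le q$) to linearize and a Minkowski/Fubini step to absorb the inner power $m$ since $p\le m$; (b) when $q<p\le m$, the outer Hardy operator is in the "$q<p$" regime, so by Theorem \ref{Hardy Inequality q<i}(ii) one gets an integral (not supremum) condition with exponent $r = pq/(p-q)$, producing the two terms $I_2$ and $I_3$ — the appearance of $\sup_{z\in(t,\infty)}$ inside both reflects that the inner part is still handled by a supremum; (c)--(d) when $m<p$, the inner Copson operator itself is now in the "$q<p$" regime (with $q$ there replaced by $m$), so applying Theorem \ref{Hardy Inequality q<i}(ii) to the inner integral converts the $\sup_z$ into an integral with exponent $p/(p-m)$, yielding the terms $I_4$ and $I_5$; the outer regime then decides between $I_1$ (when $p\le q$, case (iii)) and $I_3$ (when $q<p$, case (iv)).

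The main obstacle — and the step I would spend the most care on — is case (ii) and (iv), where \emph{both} the "merging" of two non-supremum (genuine integral) operators has to be done and the cross term must be shown to be equivalent to the sum $I_2+I_3$ (resp.\ $I_3+I_5$) rather than just dominated by it. The standard route is discretization: choose a covering sequence $\{t_k\}$ adapted to the non-decreasing function $t \mapsto \int_0^t w$ (or $t \mapsto \int_0^t w(\int_s^t u)^{q/m}\,ds$) by the usual "$2^k$" construction, estimate the integral over each $(t_k,t_{k+1}]$ from above by replacing slowly varying factors by their endpoint values, sum the resulting discrete sequence, and then perform the reverse (antidiscretization) step to recognize the sum as a continuous integral of the claimed type. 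The technical lemmas needed — monotonicity of the fundamental function $\big(\int_\cdot^\infty v^{1-p'}\big)^{1/p'}$, the equivalence $\sum_k a_k^\alpha \approx \big(\int a\big)^\alpha$-type estimates, and the "gluing" of $\sup$ over adjacent blocks — are routine but must be assembled carefully so that no regime boundary ($p=m$, $p=q$, $q/m=1$) is left uncovered. For the necessity direction in every case, I would insert the extremal-type test functions: for supremum conditions, $h$ concentrated near a single point adjusted by the dual weight $v^{1-p'}$; for integral conditions, a sum of such bumps whose $\ell^r$-mass reproduces the claimed integral, again the familiar construction in the theory of iterated Hardy inequalities. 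I would present these details only schematically, citing the prior Hardy theorems \ref{Hardy Inequality q<i} and \ref{Hardy Inequality q=i} as the engine and remarking that the full discretization is carried out as in the references \cite{Krepela, GogMusIHI, GogMusISI}.
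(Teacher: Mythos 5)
This statement is not something the paper proves. It is quoted verbatim as Theorem~1.1 of K\v{r}epela's preprint \cite{KrePick} and used as a black box in the proof of Theorem~\ref{maintheorem3}. There is therefore no ``paper's own proof'' to compare against: the paper's treatment of this result consists solely of the citation. Your proposal is an attempt to reconstruct K\v{r}epela's argument from scratch, which is a different task than the one the paper itself undertakes.

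As to the plan itself: the opening ``reduction'' is circular. You write $F(s)=\int_s^\infty f$, observe $F$ is non-increasing, and then propose to ``pass back'' to general $h$ via $F(s)=\int_s^\infty h$. But that is exactly the original parametrization with $h=f$; nothing has been reduced, and the inequality you land on is identical to the one you started from. More substantively, the genuinely hard parts of the theorem — cases (ii) and (iv), where you correctly note that two non-supremum operators must be merged and the cross terms shown to be equivalent to (not merely dominated by) $I_2+I_3$ or $I_3+I_5$ — are precisely where your plan stops being a proof and becomes a description of what a proof would have to do. The phrase ``the full discretization is carried out as in the references'' defers the entire content of the result. In particular, it is not clear from your sketch how the block-by-block estimates handle the simultaneous presence of the two weights $u$ and $w$ in the outer iterated operator, nor how the supremum over $z$ inside $I_2$ and $I_3$ survives antidiscretization; these are exactly the technical points where iterated-Copson results differ from iterated-Hardy ones, and they cannot be waved away by analogy. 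You also invoke ``Minkowski/Fubini'' to ``absorb the inner power $m$ since $p\le m$'' in case (i), but the inequality goes the wrong way for a naive Minkowski step when $q/m<1$, so this needs an argument, not an appeal. In short, the route you describe (discretization/antidiscretization governed by the relative sizes of $p,m,q$) is in the right family of techniques and matches the case structure of the theorem, but what you have written is an outline of the proof's shape, not a proof, and the key merging estimates that justify the equivalences in cases (ii)--(iv) are entirely missing.
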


\begin{theorem}\cite[Theorem  6]{Krepela} \label{Krepela Theorem}
	Let $1 < p < \infty$ and $0 < q < \infty$ and set $r:= \frac{pq}{p-q}$. Assume that $u, v, w 
	\in \mathcal{M}^+$ such that $u$ is continuous and
	$$
	0 < \int_0^t u < \infty, \quad 0 < \int_0^t v < \infty, \quad 0 <\int_0^t w < \infty 
	$$
	hold for all $t \in (0, \infty)$. Let 
	\begin{equation*}
	I = \sup_{f\in \mathcal M^+} \ddfrac{\bigg( \int_0^{\infty} \bigg(\sup_{s \in (t,\infty)} 
		u(s) \int_s^{\infty} f \bigg)^q w(t) dt\bigg)^{\frac{1}{q}}}
	{\bigg(\int_0^{\infty} f(t)^p v(t) dt\bigg)^{\frac{1}{p}}}.
	\end{equation*}
	
	{\rm (i)}  If $p \leq q$ then $I \approx I_6$, where
	\begin{equation*}
	I_6 := \sup_{t \in (0,\infty)} \bigg(\int_0^t w(s) \sup_{z \in (s,t)} u(z)^ q ds 
	\bigg)^{\frac{1}{q}} \bigg( \int_t^{\infty} v^{1-p'} \bigg)^{\frac{1}{p'}}.
	\end{equation*}
	
	{\rm (ii)}  If $q < p$, then $I \approx I_7 + I_8$, where 
	\begin{equation*}
	I_7 := \bigg( \int_0^{\infty} \bigg(\int_0^t w \bigg)^{\frac{r}{p}} w(t) \sup_{s \in 
		(t,\infty)} u(s)^r \bigg( \int_s^{\infty} v^{1-p'} \bigg)^{\frac{r}{p'}} dt 
	\bigg)^{\frac{1}{r}},
	\end{equation*}
	and 
	\begin{equation*}
	I_8 := \bigg( \int_0^{\infty} \bigg(\int_0^t w(s) \sup_{z \in (s,t)} u(z)^q ds 
	\bigg)^{\frac{r}{p}} w(t)  \sup_{z \in (t,\infty)} u(z)^q \bigg( \int_z^{\infty} v^{1-p'} 
	ds \bigg)^{\frac{r}{p'}} dt \bigg)^{\frac{1}{r}}.
	\end{equation*}
\end{theorem}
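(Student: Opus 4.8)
\emph{Outline of a proof.} Fix $f\in\mathcal M^{+}$ and set $G_f(t):=\sup_{s\in(t,\infty)}u(s)\int_s^{\infty}f$. The starting observation is that $G_f$ is non-negative and non-increasing on $(0,\infty)$ --- continuity of $u$ rules out any pathology in the supremum --- so the numerator of $I$ is the $L^q_w$-norm of a non-increasing function. The plan is to discretize this norm, to reduce the resulting block estimates to the Copson-type Hardy inequalities of Theorem~\ref{Hardy Inequality q<i} together with elementary scalar-weighted Hardy inequalities for the tails of $f$, and then to antidiscretize back to the continuous quantities $I_6$, $I_7$, $I_8$; the hypothesis $0<\int_0^t w<\infty$ is what makes a suitable discretizing sequence available and exhausting.

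\emph{Necessity.} This direction is elementary. Testing with $f=v^{1-p'}\chi_{(t,\infty)}$ one has $\|f\|_{L^p_v}^p=\int_t^{\infty}v^{1-p'}$, and $\int_s^{\infty}f=\int_t^{\infty}v^{1-p'}$ whenever $s\le t$, so for every $x\in(0,t)$
\[
G_f(x)\ \ge\ \Bigl(\sup_{s\in(x,t)}u(s)\Bigr)\int_t^{\infty}v^{1-p'};
\]
raising this to the power $q$, integrating against $w$ over $(0,t)$ and optimizing over $t$ gives $I\gtrsim I_6$. In the case $q<p$ one obtains $I_7$ and $I_8$ in the usual way by inserting discretized test functions $f=\sum_k c_k\,v^{1-p'}\chi_{(a_k,a_{k+1})}$ along a suitable partition and optimizing over the coefficients.

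\emph{Sufficiency.} This is the substantial part. One discretizes $\|G_f\|_{L^q_w}$ --- along the level sets of $G_f$ itself, or along a geometric sequence of values of $t\mapsto\int_0^t w$ --- so that $\|G_f\|_{L^q_w}^q$ is comparable to $\sum_k G_f(x_k)^q\int_{x_k}^{x_{k+1}}w$. On the block $[x_k,x_{k+1})$ one uses the split
\[
G_f(x_k)\ \lesssim\ \sup_{s\in(x_k,x_{k+1})}u(s)\int_s^{x_{k+1}}f\ +\ \Bigl(\sup_{s\in(x_k,x_{k+1})}u(s)\Bigr)\int_{x_{k+1}}^{\infty}f\ +\ G_f(x_{k+1}).
\]
The first term is controlled on each block by a Copson-type Hardy inequality (Theorem~\ref{Hardy Inequality q<i}), which is exactly where the factors $\bigl(\int_s^t v^{1-p'}\bigr)$ arise and where the dichotomy $p\le q$ versus $q<p$ produces either a single supremal block condition or a pair of integral block conditions; the second term reduces, since $\sup_{s\in(x_k,x_{k+1})}u(s)$ is constant on the block up to a fixed factor, to a one-dimensional weighted Hardy inequality for the tails $\int_{x_{k+1}}^{\infty}f$; and the third term drives the recursion and is absorbed by the antidiscretization. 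Re-summing the block conditions yields $I\lesssim I_6$ when $p\le q$ and $I\lesssim I_7+I_8$ when $q<p$. Alternatively, the substitution $t\mapsto 1/t$ converts the Copson operator into the Hardy operator $\int_0^{\sigma}$ and one may invoke reduction theorems for supremal operators acting on monotone functions.

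\emph{Main obstacle.} The delicate step is the antidiscretization: showing that the discrete block conditions glue back to the continuous quantities $I_6$, $I_7$, $I_8$ with constants independent of the partition, and in particular that the correct weight appearing inside $I_6$ and $I_8$ is the local supremum $\sup_{z\in(s,t)}u(z)^q$ rather than the pointwise value $u(t)^q$. This rests on the standard but technical gluing lemmas for sums of the form $\sum_k a_k\bigl(\sum_{j\le k}b_j\bigr)^{\alpha}$ and their integral analogues, and, when $q<p$, on a careful use of the discrete Hölder inequality with exponent $r/p$ where $r=pq/(p-q)$. The continuity of $u$ and the non-degeneracy of $\int_0^t u$, $\int_0^t v$ and $\int_0^t w$ are precisely the hypotheses that legitimize these manipulations.
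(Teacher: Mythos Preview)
The paper does not prove this statement. Theorem~\ref{Krepela Theorem} is quoted verbatim from K\v{r}epela \cite[Theorem~6]{Krepela} as a preliminary result in Section~\ref{sec:prelim}, and is then applied as a black box in the proof of Theorem~\ref{maintheorem4} to evaluate the term $C_4$. There is therefore no proof in the paper against which to compare your proposal.

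That said, your outline does follow the discretization--antidiscretization strategy that underlies K\v{r}epela's original argument, so in spirit you are aligned with the source. As a sketch it is plausible but leaves the genuinely hard work unspecified: the antidiscretization step that recovers the local supremum $\sup_{z\in(s,t)}u(z)^q$ inside the continuous conditions $I_6$ and $I_8$ is not a routine gluing lemma, and your remark that ``$\sup_{s\in(x_k,x_{k+1})}u(s)$ is constant on the block up to a fixed factor'' is not true in general and cannot be used as stated. If you want to turn this into an actual proof, you would need to supply the specific covering sequence adapted to the supremal kernel and the corresponding antidiscretization lemma; otherwise the appropriate course here is simply to cite \cite{Krepela}, as the paper does.
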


\section{Proofs of the Main Results}\label{S:proofs}

In this section we will prove our main results. Following lemma explains the assumption $p<1$, when 
characterizing our main inequality.

\begin{lemma}\label{triviality}
	Let $0 < p, q, \theta < \infty$. Assume that $u, v, w \in \mathcal{M}^+$ such that 
	$$
	0 < \int_t^{\infty} u < \infty, \quad 0 < \int_t^{\infty} w < \infty, 
	$$
	for all $t \in (0, \infty)$. If $p >1$, then inequality \eqref{main inequality} holds only for 
	trivial functions.  
\end{lemma}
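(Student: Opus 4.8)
The plan is to argue by contradiction: suppose \eqref{main inequality} holds for \emph{every} $f \in \mathcal M^+$ with some finite constant $C$, and exhibit test functions along which the ratio of the two sides is unbounded, forcing $p\le 1$. We may assume that $v$ does not vanish a.e.\ on $(0,\infty)$, since otherwise the left-hand side of \eqref{main inequality} is identically zero. First I would dispose of the case $v\notin L^1_{\mathrm{loc}}(0,\infty)$: there is then a bounded interval $(a,b)\subset(0,\infty)$ with $\int_a^b v=\infty$, and the choice $f=\chi_{(a,b)}$ makes $\int_0^t f^p v=\int_a^b v=\infty$ for all $t\ge b$, so (as $\int_b^\infty u>0$) the left-hand side of \eqref{main inequality} is infinite; since $\int_0^t f\le b-a$ for every $t$ while the integrand on the right is supported in $(a,\infty)$, the right-hand side is at most $(b-a)\big(\int_a^\infty w\big)^{1/\theta}<\infty$, a contradiction.

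So I may assume $v\in L^1_{\mathrm{loc}}(0,\infty)$ and fix a point $t_0\in(0,\infty)$ that is a right Lebesgue point of $v$ and satisfies $v(t_0)>0$ (almost every point of the positive-measure set $\{v>0\}$ qualifies). For $\epsilon\in(0,t_0)$ set $f_\epsilon=\chi_{(t_0,\,t_0+\epsilon)}$ and $V_\epsilon:=\int_{t_0}^{t_0+\epsilon}v$. Restricting the outer integral to $(t_0+\epsilon,\infty)$, where $\int_0^t f_\epsilon^p v=V_\epsilon$, gives
\[
\bigg(\int_0^{\infty}\Big(\int_0^t f_\epsilon^p v\Big)^{q/p} u(t)\,dt\bigg)^{1/q}\ \ge\ V_\epsilon^{1/p}\bigg(\int_{t_0+\epsilon}^{\infty} u\bigg)^{1/q};
\]
on the other hand $\int_0^t f_\epsilon\le\epsilon$ for every $t$ and vanishes for $t\le t_0$, so
\[
\bigg(\int_0^{\infty}\Big(\int_0^t f_\epsilon\Big)^{\theta} w(t)\,dt\bigg)^{1/\theta}\ \le\ \epsilon\bigg(\int_{t_0}^{\infty} w\bigg)^{1/\theta},
\]
a finite positive quantity by hypothesis. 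Plugging both bounds into \eqref{main inequality} yields
\[
C\ \ge\ \frac{V_\epsilon^{1/p}}{\epsilon}\,\bigg(\int_{t_0+\epsilon}^{\infty} u\bigg)^{1/q}\bigg(\int_{t_0}^{\infty} w\bigg)^{-1/\theta}.
\]

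Now let $\epsilon\to 0^+$. Since $t_0$ is a right Lebesgue point of $v$, $V_\epsilon/\epsilon\to v(t_0)>0$, hence $V_\epsilon^{1/p}/\epsilon=(V_\epsilon/\epsilon)^{1/p}\,\epsilon^{\,1/p-1}\to+\infty$ because $1/p-1<0$ for $p>1$; meanwhile $\int_{t_0+\epsilon}^{\infty}u\to\int_{t_0}^{\infty}u>0$ and $\int_{t_0}^{\infty}w$ is a fixed number in $(0,\infty)$. Thus the right-hand side of the last display tends to $+\infty$, contradicting $C<\infty$. Therefore no finite $C$ works, i.e.\ \eqref{main inequality} holds only for the trivial function, which is the assertion of the lemma.

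The underlying estimates are routine; the only care needed is the split according to whether $v$ is locally integrable, everything else being furnished by the standing hypotheses $0<\int_t^\infty u<\infty$ and $0<\int_t^\infty w<\infty$. The real content is the factor $\epsilon^{1/p-1}$ produced by comparing $\big(\int_0^t f^p v\big)^{1/p}$ with $\int_0^t f$ over an interval of length $\epsilon$ collapsing to a point: it is unbounded precisely when $p>1$ and neutral at $p=1$, which explains why the paper retains the standing assumption $p<1$.
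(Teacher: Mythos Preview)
Your argument is correct and follows essentially the same route as the paper's proof: both localize to functions supported on a fixed bounded interval, bound the left-hand side of \eqref{main inequality} from below by $\big(\int h^p v\big)^{1/p}\big(\int u\big)^{1/q}$ and the right-hand side from above by $\big(\int h\big)\big(\int w\big)^{1/\theta}$, and then exploit the impossibility of $\|h\|_{L_p(v)}\lesssim\|h\|_{L_1}$ for $p>1$. The paper simply records this last step as the failure of the embedding $L_1({\bf 1})\hookrightarrow L_p(v)$, whereas you unpack it explicitly via characteristic functions of shrinking intervals and a Lebesgue-point argument (and you also handle the side case $v\notin L^1_{\mathrm{loc}}$ separately); the extra care is harmless and makes the contradiction self-contained.
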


\begin{proof}
	Suppose that there exists a positive constant $C$ such that inequality \eqref{main inequality} 
	holds for all non-negative measurable functions on $(0,\infty)$. 
	
	Let $0 < \tau_1 < \tau_2 < \infty$ and assume that $h$ is a non-negative measurable function 
	such that $\supp h \subset [\tau_1, \tau_2]$. Testing inequality \eqref{main inequality} with 
	$h$, one can see that
	\begin{align*}
	\bigg(\int_0^{\infty} \bigg(\int_0^t h^p v\bigg)^{\frac{q}{p}} u(t) dt \bigg)^{\frac{1}{q}} 
	&\geq \bigg(\int_{\tau_2}^{\infty} \bigg(\int_0^t h^p v\bigg)^{\frac{q}{p}} u(t) dt 
	\bigg)^{\frac{1}{q}}\\
	& = \bigg(\int_{\tau_1}^{\tau_2} h^p v\bigg)^{\frac{1}{p}} \bigg(\int_{\tau_2}^{\infty} 
	u(t) dt \bigg)^{\frac{1}{q}}
	\end{align*}
	and
	\begin{align*}
	\bigg( \int_0^{\infty} \bigg(\int_0^t f(s)  ds \bigg)^{\theta} w(t) dt 
	\bigg)^{\frac{1}{\theta}} & = \bigg( \int_{\tau_1}^{\infty} \bigg(\int_0^t f(s)  ds 
	\bigg)^{\theta} w(t) dt \bigg)^{\frac{1}{\theta}} \\
	&\leq \bigg(\int_{\tau_1}^{\tau_2} h \bigg) \bigg(\int_{\tau_1}^{\infty} w 
	\bigg)^{\frac{1}{\theta}}
	\end{align*}
	hold. Hence the validity of inequality \eqref{main inequality} implies that
	\begin{equation*}
	\bigg(\int_{\tau_1}^{\tau_2} h^p v\bigg)^{\frac{q}{p}} \bigg(\int_{\tau_2}^{\infty} u(t) dt 
	\bigg)^{\frac{1}{q}} \leq C \bigg(\int_{\tau_1}^{\tau_2} h \bigg) 
	\bigg(\int_{\tau_1}^{\infty} w \bigg)^{\frac{1}{\theta}}.
	\end{equation*}
	Since $0 < \int_t^{\infty} u, \int_t^{\infty} w < \infty$ for all $t \in (0, \infty)$, we 
	arrive at $L_1({\bf 1}) \hookrightarrow L_{p}(v)$ when $p >1$, which is a contradiction. 
\end{proof}

\

{\bf \textit{Proof of Theorem~\ref{maintheorem1}}. }
We begin with the well-known duality principle in weighted Lebesgue spaces. Recall that $p \in 
(1,\infty)$, $f \in \mathcal{M}^+$ and $v$ is a weight on $(0,\infty)$, then
\begin{equation*}%\label{duality}
\bigg(\int_0^{\infty} f(t)^p v(t) dt \bigg)^{\frac{1}{p}} = \sup_{h\in \mathcal M^+} 
\frac{\int_0^{\infty} f(t) h(t) dt}{\big(\int_0^{\infty} h(t)^{p'} v(t)^{1-p'} 
	dt\big)^{\frac{1}{p'}}}.
\end{equation*} 
It is clear that since in our case  $q/p >1$, using Sawyer duality, the best constant of inequality 
\eqref{main inequality} satisfies
\begin{align*}
C = \sup_{f \in \mathcal M^+} \ddfrac{1}{\bigg(\int_0^{\infty} \bigg(\int_0^t f \bigg)^{\theta} 
	w(t) dt\bigg)^{\frac{1}{\theta}}}  \sup_{h\in \mathcal  M^+} \ddfrac{\bigg(\int_0^{\infty} h(t) 
	\int_0^t f(s)^p v(s) ds \, dt\bigg)^{\frac{1}{p}}}{\bigg( \int_0^{\infty} h(t)^{\frac{q}{q-p}} 
	u(t)^{-\frac{p}{q-p}} dt\bigg)^{\frac{q-p}{qp}}}.
\end{align*}
Interchanging suprema and applying Fubini, we get that
\begin{align*}
C = \sup_{h\in \mathcal  M^+} \ddfrac{1}{\bigg( \int_0^{\infty} h(t)^{\frac{q}{q-p}} 
	u(t)^{-\frac{p}{q-p}} dt\bigg)^{\frac{q-p}{qp}}} \sup_{f \in \mathcal M^+} 
\ddfrac{\bigg(\int_0^{\infty} f(t)^p v(t) \int_t^{\infty} h(s) ds 
	dt\bigg)^{\frac{1}{p}}}{\bigg(\int_0^{\infty} \bigg(\int_0^t f \bigg)^{\theta} w(t) 
	dt\bigg)^{\frac{1}{\theta}}}.
\end{align*}
Denote by
\begin{equation}\label{D}
D := \sup_{f \in \mathcal M^+} \ddfrac{\bigg(\int_0^{\infty} f(t)^p v(t) \int_t^{\infty} h(s) 
	ds dt\bigg)^{\frac{1}{p}}}{\bigg(\int_0^{\infty} \bigg(\int_0^t f \bigg)^{\theta} w(t) 
	dt\bigg)^{\frac{1}{\theta}}}.
\end{equation}
Since $\theta \leq p < 1$, we have by applying [Theorem~\ref{Reverse Hardy Inequality q < p}, (i)] 
that
\begin{equation*}
D \approx \sup_{x \in (0,\infty)} \bigg( \int_x^{\infty} v(s)^{\frac{1}{1-p}} 
\bigg(\int_s^{\infty} h \bigg)^{\frac{1}{1-p}} ds \bigg)^{\frac{1-p}{p}} \bigg(\int_x^{\infty} 
w \bigg)^{-\frac{1}{\theta}}.
\end{equation*}
Therefore, 
\begin{equation*}
C \approx \sup_{h\in \mathcal  M^+} \ddfrac{ \sup_{x \in (0,\infty)} \bigg( \int_x^{\infty} 
	v(s)^{\frac{1}{1-p}} \bigg(\int_s^{\infty} h \bigg)^{\frac{1}{1-p}} ds \bigg)^{\frac{1-p}{p}} 
	\bigg(\int_x^{\infty} w \bigg)^{-\frac{1}{\theta}}}{\bigg( \int_0^{\infty} h(t)^{\frac{q}{q-p}} 
	u(t)^{-\frac{p}{q-p}} dt\bigg)^{\frac{q-p}{qp}}}.
\end{equation*}
Interchanging suprema yields that
\begin{equation*}
C \approx \sup_{x \in (0,\infty)} \bigg(\int_x^{\infty} w \bigg)^{-\frac{1}{\theta}} \sup_{h\in 
	\mathcal  M^+} \ddfrac{  \bigg( \int_0^{\infty}\bigg(\int_s^{\infty} h \bigg)^{\frac{1}{1-p}}  
	v(s)^{\frac{1}{1-p}} \chi_{(x,\infty)}(s) \, ds \bigg)^{\frac{1-p}{p}} }{\bigg( \int_0^{\infty} 
	h(t)^{\frac{q}{q-p}} u(t)^{-\frac{p}{q-p}} dt\bigg)^{\frac{q-p}{qp}}}.
\end{equation*}
Then, it remains to apply Theorem~\ref{Hardy Inequality q<i}. To this end, we need to split into 
two cases.

{\rm(i)} If $1 \leq q$, in this case $\frac{1}{1-p} \geq \frac{q}{q-p}$, then applying  
[Theorem~\ref{Hardy Inequality q<i}, (i)], we obtain that
\begin{align*}
C & \approx  \sup_{x \in (0,\infty)} \bigg(\int_x^{\infty} w \bigg)^{-\frac{1}{\theta}} \sup_{t 
	\in (0,\infty)} \bigg(\int_0^t v(s)^{\frac{1}{1-p}} \chi_{(x,\infty)}(s) ds 
\bigg)^{\frac{1-p}{p}} \bigg( \int_t^{\infty} u\bigg)^{\frac{1}{q}}\\
& = \sup_{x \in (0,\infty)} \bigg(\int_x^{\infty} w \bigg)^{-\frac{1}{\theta}} \max \left\{ 
\sup_{t \in (0,x)} \bigg(\int_0^t v(s)^{\frac{1}{1-p}} \chi_{(x,\infty)}(s) ds 
\bigg)^{\frac{1-p}{p}} \bigg( \int_t^{\infty} u\bigg)^{\frac{1}{q}},  \right. \\
& \hspace{4cm}  \left. \sup_{t \in (x,\infty)} \bigg(\int_0^t v(s)^{\frac{1}{1-p}} 
\chi_{(x,\infty)}(s) ds \bigg)^{\frac{1-p}{p}} \bigg( \int_t^{\infty} 
u\bigg)^{\frac{1}{q}}\right\}\\
& = \sup_{x \in (0,\infty)} \bigg(\int_x^{\infty} w \bigg)^{-\frac{1}{\theta}} \sup_{t \in 
	(x,\infty)} \bigg(\int_x^t  v^{\frac{1}{1-p}} \bigg)^{\frac{1-p}{p}} \bigg( \int_t^{\infty} 
u\bigg)^{\frac{1}{q}}
\end{align*}

{\rm(ii)} If $q < 1$, in this case $\frac{1}{1-p} < \frac{q}{q-p}$, then applying  
[Theorem~\ref{Hardy Inequality q<i}, (ii)], we arrive at 
\begin{align*}
C & \approx   \sup_{x \in (0,\infty)} \bigg(\int_x^{\infty} w \bigg)^{-\frac{1}{\theta}} 
\bigg(\int_0^{\infty} \bigg(\int_0^t v^{\frac{1}{1-p}} \chi_{(x,\infty)}(s) ds 
\bigg)^{\frac{q(1-p)}{p(1-q)}} \bigg( \int_t^{\infty} u \bigg)^{\frac{q}{1-q}} u(t) dt 
\bigg)^{\frac{1-q}{q}} \\
& = \sup_{x \in (0,\infty)} \bigg(\int_x^{\infty} w \bigg)^{-\frac{1}{\theta}} 
\bigg(\int_x^{\infty} \bigg(\int_x^t v^{\frac{1}{1-p}}
\bigg)^{\frac{q(1-p)}{p(1-q)}} \bigg( \int_t^{\infty} u \bigg)^{\frac{q}{1-q}} u(t) dt 
\bigg)^{\frac{1-q}{q}}.
\end{align*}

\

{\bf \textit{Proof of Theorem~\ref{maintheorem2}}. }
As in the previous proof since $q / p > 1 $, duality approach yields that,
\begin{equation*}
C =  \sup_{h\in \mathcal  M^+} \ddfrac{D}{\bigg( \int_0^{\infty} h(t)^{\frac{q}{q-p}} 
	u(t)^{-\frac{p}{q-p}} dt\bigg)^{\frac{q-p}{qp}}},
\end{equation*}
where $D$ is defined in \eqref{D}. Since, in this case $\theta \leq p =1$, we have by applying 
[Theorem~\ref{Reverse Hardy Inequality q < p}, (ii)], that
\begin{align*}
C \approx  \sup_{h\in \mathcal  M^+} \ddfrac{ \sup_{x \in (0,\infty)} \bigg(\int_x^{\infty} 
	w\bigg)^{-\frac{1}{\theta}} \esup_{s \in (x,\infty)} v(s) \int_s^{\infty} h}{\bigg( 
	\int_0^{\infty} h(t)^{\frac{q}{q-p}} u(t)^{-\frac{p}{q-p}} dt\bigg)^{\frac{q-p}{qp}}}.
\end{align*} 

Recall that if $F$ is a non-negative, non-decreasing measurable function on $(0,\infty)$, then 
\begin{equation}\label{Fubini sup inc.}
\esup_{t\in (0,\infty)} F(t) G(t) = \esup_{t\in (0,\infty)} F(t) \esup_{\tau \in (t,\infty)} 
G(\tau), 
\end{equation}
holds (see, for instance, page 85 in \cite{GogPick}). On using \eqref{Fubini sup inc.}, we obtain 
that
\begin{align*}
C \approx  \sup_{h\in \mathcal  M^+} \ddfrac{ \sup_{x \in (0,\infty)} \bigg(\int_x^{\infty} 
	w\bigg)^{-\frac{1}{\theta}}  v(x) \int_x^{\infty} h} {\bigg( \int_0^{\infty} 
	h(t)^{\frac{q}{q-p}} u(t)^{-\frac{p}{q-p}} dt\bigg)^{\frac{q-p}{qp}}}.
\end{align*} 
Finally, applying Theorem~\ref{Hardy Inequality q=i}, we arrive at
\begin{equation*}
C \approx \sup_{t \in (0,\infty)} \bigg( \int_t^{\infty} u\bigg)^{\frac{1}{q}} \esup_{s \in (0, 
	t)} v(s) \bigg(\int_s^{\infty} w \bigg)^{-\frac{1}{\theta}}.
\end{equation*}

\

{\bf \textit{Proof of Theorem~\ref{maintheorem3}}. }
Similar to the previous proofs, we have that
\begin{equation*}
C =  \sup_{h\in \mathcal  M^+} \ddfrac{D}{\bigg( \int_0^{\infty} h(t)^{\frac{q}{q-p}} 
	u(t)^{-\frac{p}{q-p}} dt\bigg)^{\frac{q-p}{qp}}},
\end{equation*}
where $D$ is defined in \eqref{D}. Since in this case $p < 1$ and $p < \theta$, we have, by 
applying [Theorem~\ref{Reverse Hardy Inequality p < q}, (i)], that
\begin{align*}
C &\approx  \bigg(\int_0^{\infty} w \bigg)^{-\frac{1}{\theta}}\sup_{h\in \mathcal  M^+} 
\ddfrac{\bigg( \int_0^{\infty} \bigg(\int_s^{\infty} h \bigg)^{\frac{1}{1-p}} 
	v(s)^{\frac{1}{1-p}} ds \bigg)^{\frac{1-p}{p}}}{\bigg( \int_0^{\infty} h(t)^{\frac{q}{q-p}} 
	u(t)^{-\frac{p}{q-p}} dt\bigg)^{\frac{q-p}{qp}}} \\ 
&\hspace{1cm}+  \sup_{h\in \mathcal  M^+} \ddfrac{\bigg(\int_0^{\infty} \bigg( \int_x^{\infty} 
	\bigg(\int_t^{\infty} h \bigg)^{\frac{1}{1-p}} v(t)^{\frac{1}{1-p}} dt \bigg)^{\frac{\theta 
			(1-p)}{\theta -p}}  \bigg( \int_x^{\infty} w\bigg)^{-\frac{\theta}{\theta-p}} w(x) dx 
	\bigg)^{\frac{\theta - p}{\theta p}}}{\bigg( \int_0^{\infty} h(t)^{\frac{q}{q-p}} 
	u(t)^{-\frac{p}{q-p}} dt\bigg)^{\frac{q-p}{qp}}}\\
&=: C_1 + C_2.
\end{align*}

Let us first consider $C_1$. We need to consider the cases $q < 1$ and $1 \leq q $ seperately. 
Hence, we begin with the condition $p < 1 \leq q$. Using [Theorem~\ref{Hardy Inequality q<i}, (i)], 
we obtain that
\begin{equation}\label{C1 1<q}
C_1 \approx \bigg(\int_0^{\infty} w\bigg)^{-\frac{1}{\theta}} \sup_{t \in (0,\infty)} 
\bigg(\int_0^t v^{\frac{1}{1-p}}\bigg)^{\frac{1-p}{p}} \bigg(\int_t^{\infty} 
u\bigg)^{\frac{1}{q}}=: A_4. 
\end{equation}
On the other hand, if $p < q < 1$, using [Theorem~\ref{Hardy Inequality q<i}, (ii)], we get that
\begin{equation}\label{C1 q<1}
C_1 \approx \bigg(\int_0^{\infty} w\bigg)^{-\frac{1}{\theta}} \bigg(\int_0^{\infty} 
\bigg(\int_0^t 
v^{\frac{1}{1-p}}\bigg)^{\frac{q(1-p)}{p(1-q)}} \bigg(\int_t^{\infty} u\bigg)^{\frac{q}{1-q}} 
u(t) 
dt\bigg)^{\frac{1-q}{q}}=: A_{8}. 
\end{equation}

Let us now eveluate $C_2$. We will apply Theorem~\ref{KrepelaPick Theorem} with parameters 
$$
m= \frac{1}{1-p}, \quad q=\frac{\theta}{\theta - p}, \quad p=\frac{q}{q-p}.
$$
Thus, we need to consider the conditions on parameters in four cases.

{\rm (i)} If $p < \min\{1,q, \theta\}$ and $\max\{1, \theta\} \leq q$, then applying 
[Theorem~\ref{KrepelaPick Theorem}, (i)], we get that $C_2 \approx I_1^{\frac{1}{p}}$, where
\begin{equation}\label{I1 proof}
I_1^{\frac{1}{p}} = \sup_{t \in (0,\infty)} \bigg( \int_0^t  \bigg(\int_s^{\infty} w 
\bigg)^{-\frac{\theta}{\theta-p}} w(s) \bigg(\int_s^t v^{\frac{1}{1-p}} 
\bigg)^{\frac{\theta(1-p)}{\theta-p}} ds \bigg)^{\frac{\theta-p}{\theta p}} 
\bigg(\int_t^{\infty} u 
\bigg)^{\frac{1}{q}} = A_{5}.
\end{equation}
Then, since $1 < q$ in this case, we have that $C_1 \approx A_4$. Therefore  $C = C_1 + C_2 \approx 
A_4 + A_{5}$. 

{\rm (ii)} If $p < \min\{1,q, \theta\}$ and $1 \leq q < \theta$, then applying 
[Theorem~\ref{KrepelaPick Theorem}, (ii)], we get that $C_2 \approx I_2^{\frac{1}{p}} + 
I_3^{\frac{1}{p}}$, where
\begin{align*}%\label{A_{6}}
I_2^{\frac{1}{p}} &=  \left( \int_0^{\infty} \bigg( \int_0^t \bigg( \int_s^{\infty} w 
\bigg)^{-\frac{\theta}{\theta - p}} w(s) ds \bigg)^{\frac{\theta(q-p)}{p(\theta - q)}}  \bigg( 
\int_t^{\infty} w \bigg)^{-\frac{\theta}{\theta - p}} w(t) \right. \notag\\
& \hspace{2cm} \times \left. \sup_{z \in (t,\infty)} \bigg(\int_t^z v^{\frac{1}{1-p}} 
\bigg)^{\frac{\theta q(1-p)}{p(\theta - q)}} 
\bigg( \int_z^{\infty} u \bigg)^{\frac{\theta}{\theta - q}} dt \right)^{\frac{\theta-q}{\theta 
		q}} 
=: A_{6}.
\end{align*}
and
\begin{align}\label{I3 proof}
I_3^{\frac{1}{p}} &=  \left( \int_0^{\infty} \bigg( \int_0^t \bigg( \int_s^{\infty} w 
\bigg)^{-\frac{\theta}{\theta - p}} w(s) \bigg(\int_s^t v^{\frac{1}{1-p}} 
\bigg)^{\frac{\theta(q-p)}{\theta - p}} ds \bigg)^{\frac{\theta(q-p)}{p(\theta - q)}}  \bigg( 
\int_t^{\infty} w \bigg)^{-\frac{\theta}{\theta - p}} w(t) \right. \notag\\
& \hspace{2cm} \times \left. \sup_{z \in (t,\infty)} \bigg(\int_t^z v^{\frac{1}{1-p}} 
\bigg)^{\frac{\theta(1-p)}{\theta - p}} \bigg( \int_z^{\infty} u \bigg)^{\frac{\theta}{\theta - 
		q}} 
dt \right)^{\frac{\theta-q}{\theta q}} =: A_{7}.
\end{align}
Since, $1 < q$ in this case, we have that $C_1 \approx A_4$. Therefore  $C = C_1 + C_2 \approx A_4 
+ A_{6} + A_{7}$. 

{\rm (iii)} If $p < \min\{1,q, \theta\}$ and $\theta \leq q < 1$, then applying 
[Theorem~\ref{KrepelaPick Theorem}, (iii)], we get that $C_2 \approx I_1^{\frac{1}{p}} + 
I_4^{\frac{1}{p}}$, where $I_1$ is given in \eqref{I1 proof} and
\begin{align*}%\label{I_4}
I_4^{\frac{1}{p}} &:=  \sup_{t \in (0,\infty)} \bigg( \int_0^t \bigg( \int_s^{\infty} w 
\bigg)^{-\frac{\theta}{\theta - p}} w(s) ds \bigg)^{\frac{\theta-p}{\theta p}} \\
&\hspace{2cm} \times \bigg( \int_t^{\infty} \bigg(\int_t^s v^{\frac{1}{1-p}} 
\bigg)^{\frac{q(1-p)}{p(1-q)}} \bigg( \int_s^{\infty} u \bigg)^{\frac{q}{1-q}} u(s) ds 
\bigg)^{\frac{1-q}{q}} \\
&=: A_9.
\end{align*}
Since $q < 1$, we have that $C_1 \approx A_{8}$. Thus, $C = C_1 + C_2 \approx A_{8} + A_{5} + A_9$.

{\rm (iv)} If $p < \min\{1,q, \theta\}$ and $q < \min\{1,\theta\}$, then applying 
[Theorem~\ref{KrepelaPick Theorem}, (iv)], we get that $C_2 \approx I_3^{\frac{1}{p}} + 
I_5^{\frac{1}{p}}$, where $I_3$ is given in \eqref{I3 proof} and
\begin{align*}%\label{A_10}
I_5^{\frac{1}{p}} &:=  \left( \int_0^{\infty} \bigg( \int_0^t \bigg( \int_s^{\infty} w 
\bigg)^{-\frac{\theta}{\theta - p}} w(s) ds \bigg)^{\frac{\theta(q-p)}{p(\theta - q)}}  \bigg( 
\int_t^{\infty} w \bigg)^{-\frac{\theta}{\theta - p}} w(t) \right. \notag\\
& \hspace{2cm} \times \left. \bigg( \int_t^{\infty} \bigg(\int_t^s v^{\frac{1}{1-p}} 
\bigg)^{\frac{q(1-p)}{p(1-q)}} \bigg( \int_s^{\infty} u \bigg)^{\frac{q}{1 - q}} u(s) ds 
\bigg)^{\frac{\theta(1-q)}{\theta-q}} dt\right)^{\frac{\theta-q}{\theta q}} =: A_{10}.
\end{align*}
Since, $q < 1$, again we have that $C_1 \approx A_{8}$, which yields that $C = C_1 + C_2 \approx 
A_{8} + A_{7} + A_{10}$.

%\

{\bf \textit{Proof of Theorem~\ref{maintheorem4}}. } 
We have already shown that
\begin{equation*}
C =  \sup_{h\in \mathcal  M^+} \ddfrac{D}{\bigg( \int_0^{\infty} h(t)^{\frac{q}{q-p}} 
	u(t)^{-\frac{p}{q-p}} dt\bigg)^{\frac{q-p}{qp}}},
\end{equation*}
where $D$ is defined in \eqref{D}. Since in this case $p = 1$ and $p < \theta$, we have, by 
applying [Theorem~\ref{Reverse Hardy Inequality p < q}, (ii)], that
\begin{align*}
C &\approx  \bigg(\int_0^{\infty} w \bigg)^{-\frac{1}{\theta}} \sup_{h\in \mathcal  M^+} 
\ddfrac{\esup_{x \in (0,\infty)} v(x) \int_x^{\infty} h }{\bigg( \int_0^{\infty} 
	h(t)^{\frac{q}{q-p}} u(t)^{-\frac{p}{q-p}} dt\bigg)^{\frac{q-p}{qp}}} \\ 
&\hspace{1cm}+  \sup_{h\in \mathcal  M^+} \ddfrac{\bigg( \int_0^{\infty} \bigg( \esup_{s \in 
		(x,\infty)} v(s)\int_s^{\infty} h   \bigg)^{\frac{\theta}{\theta-1}}  
	\bigg( \int_x^{\infty} w \bigg)^{-\frac{\theta}{\theta-1}} w(x) dx \bigg)^{\frac{\theta - 
			1}{\theta }}}{\bigg( \int_0^{\infty} h(t)^{\frac{q}{q-1}} u(t)^{-\frac{1}{q-1}} 
	dt\bigg)^{\frac{q-1}{q}}}\\
&=: C_3 + C_4.
\end{align*}
Since $q/q-1 > 1$, applying Theorem~\ref{Hardy Inequality q=i}, we have that
\begin{equation*}
C_3 \approx \bigg(\int_0^{\infty} w \bigg)^{-\frac{1}{\theta}} \sup_{t \in (0,\infty)} 
\bigg(\esup_{s \in (0, t)} v(s)\bigg) \bigg(\int_t^{\infty} u\bigg)^{\frac{1}{q}} =: A_{11}.
\end{equation*}
On the other hand, in order to calculate $C_4$, we will apply Theorem~\ref{Krepela Theorem} with 
parameters
$$
q= \frac{\theta}{\theta-1} \quad \text{and} \quad p=\frac{q}{q-1}.
$$
We need to apply this theorem to the cases $\theta \leq q$ and $q < \theta$ seperately. 

{\rm (i)} If $\theta \leq q$, then applying [Theorem~\ref{Krepela Theorem}, (i)], we have that $C_4 
\approx I_6$, where 
\begin{align*}%\label{A_12}
I_6 = \sup_{t \in (0,\infty)} \bigg( \int_0^t \bigg( \int_x^{\infty} w 
\bigg)^{-\frac{\theta}{\theta -1}} w(x) \sup_{z \in (x,t)} v(z)^{\frac{\theta}{\theta-1}} dx 
\bigg)^{\frac{\theta-1}{\theta}} \bigg( \int_t^{\infty} u\bigg)^{\frac{1}{q}} =: A_{12}.
\end{align*}
Therefore, $C = C_3 + C_4 \approx A_{11} + A_{12}$.

{\rm (ii)} If $q < \theta $, then applying [Theorem~\ref{Krepela Theorem}, (ii)], we have that $C_4 
\approx I_7 + I_8$, where
\begin{align*}%\label{A_13}
I_7 &=  \left( \int_0^{\infty} \bigg( \int_0^t \bigg( \int_x^{\infty} w 
\bigg)^{-\frac{\theta}{\theta - 1}} w(x) dx \bigg)^{\frac{\theta(q-1)}{\theta - q}}  \bigg( 
\int_t^{\infty} w \bigg)^{-\frac{\theta}{\theta - 1}} w(t) \right. \\
& \hspace{2cm} \times \left. \sup_{z \in (t,\infty)} v(z)^{\frac{\theta q}{\theta -q}} 
\bigg( \int_z^{\infty} u \bigg)^{\frac{\theta}{\theta - q}}  dt \right)^{\frac{\theta-q}{\theta 
		q}}  =: A_{13},
\end{align*}
and
\begin{align*}%\label{A_14}
I_8 &= \left( \int_0^{\infty} \bigg( \int_0^t \bigg( \int_x^{\infty} w 
\bigg)^{-\frac{\theta}{\theta - 1}} w(x) \sup_{z \in (x,t)} v(z)^{\frac{\theta}{\theta-1}} dx 
\bigg)^{\frac{\theta(q-1)}{\theta - q}}  \bigg( \int_t^{\infty} w \bigg)^{-\frac{\theta}{\theta 
		- 1}} w(t) \right. \\
& \hspace{2cm} \times \left. \sup_{z \in (t,\infty)} v(z)^{\frac{\theta}{\theta-1}} \bigg( 
\int_z^{\infty} u 
\bigg)^{\frac{\theta}{\theta - q}}  dt \right)^{\frac{\theta-q}{\theta q}}  =: A_{14}.
\end{align*}
Then, we arrive at $C = C_3 + C_4 \approx A_{11} + A_{13} + A_{14}$.

\

{\bf \textit{Acknowledgements}.} The author would like to thank Professor Amiran Gogatishvili for 
his 
valuable suggestions and guidance during the preperation of this manuscript.

%%%%%%%% Bibliography %%%%%%%%%%%%%%%%%%%%%%%%%%%%%%%%%%%%%%%%%%%%%%%%%

\end{document}